\keywords{Realizability, Axiom of Choice, Church's thesis, Pointfree topology}
\theoremstyle{plain} 
\newcommand{\mlu}{\mbox{\bf MLtt$_1$}}
\newtheorem{theorem}[thm]{Theorem}
\newtheorem{lemma}[thm]{Lemma}
\theoremstyle{definition}
\newtheorem{definition}[thm]{Definition}
\newtheorem{remark}[thm]{Remark}
\newcommand{\mluif}{\mbox{\bf MLtt$_{ind}$}}
\newcommand{\mlur}{\mbox{\bf MLtt$_1^r$}}
\newcommand{\mluifr}{\mbox{\bf MLtt$_{ind}^r$}}
\newcommand{\mtt}{\mbox{{\bf mTT}}}
\newcommand{\mttind}{{\bf mTT}$_{ind}$}
\newcommand{\emtt}{\mbox{{\bf emTT}}}
\newcommand{\emttind}{\mbox{{\bf emTT$_{ind}$}}}
\newcommand{\ct}{\mbox{\bf CT}}
\newcommand{\ac}{\mbox{\bf AC}}
\newcommand{\tar}{\widehat{ID_1}}
\newcommand{\czfrea}{{\bf CZF+REA}}
\newcommand{\iduh}{$\widehat{ID}_1$}
\newcommand{\mf}{{\bf MF}}
\newcommand{\mfind}{{\bf MF$_{ind}$}}
\def\cov{\lhd} 
\newcommand{\rft}{\widetilde{\mathsf{rf}}}
\newcommand{\trt}{\widetilde{\mathsf{tr}}}
\newcommand{\sigmat}{\widetilde{\Sigma}}
\newcommand{\pit}{\widetilde{\Pi}}
\newcommand{\pp}{\mathfrak p}
\begin{document}

\title[Realizability for formal topologies, Church's Thesis, Axiom of Choice]{A  realizability semantics for inductive formal topologies, Church's Thesis and Axiom of Choice}
  
  \author[M.~E.~Maietti]{Maria Emilia Maietti\rsuper{a}}
\address{\lsuper{a}Dipartimento di Matematica ``Tullio Levi Civita'',
Universit\`a di Padova, Italy}
\email{maietti@math.unipd.it}
\email{maschio@math.unipd.it}

\author[S.~Maschio]{Samuele Maschio\rsuper{a}}

\address{\lsuper{b}School of Mathematics,
University of Leeds, UK}
\email{M.Rathjen@leeds.ac.uk}
\author[M.~Rathjen]{Michael Rathjen\rsuper{b}}

\thanks{Projects EU-MSCA-RISE project 731143 "Computing with Infinite
Data" (CID), MIUR-PRIN 2010-2011 and Correctness by Construction (EU
7th framework programme, grant no.~PIRSES-GA-2013-612638) provided
support for the research presented in the paper.}

\maketitle
\begin{abstract}
  We present a Kleene realizability semantics for the intensional level of the
  Minimalist Foundation, for short \mtt, extended with inductively generated
  formal topologies, the formal Church's thesis and axiom of choice.
  
  This semantics is an extension of the one used to show  the consistency of the intensional level of the
  Minimalist Foundation with the axiom of choice and the formal Church's thesis in the work by Ishihara, Maietti, Maschio, Streicher
  [Arch.Math.Logic,57(7-8):873-888,2018].

  A main novelty here is that such a semantics is formalized in a constructive
  theory as Aczel's constructive set theory {\bf CZF} extended with the 
  regular extension axiom.
\end{abstract}
\section{Introduction}
A  main motivation for introducing the Minimalist Foundation, for short \mf, in \cite{mtt,m09}
was the desire to provide a foundation where to formalize constructive point-free topology in a way compatible with most relevant constructive foundations.
In particular, \mf\ was designed with the purpose of formalizing the topological results developed
by adopting the approach of Formal Topology by P. Martin-L{\"o}f and G. Sambin 
introduced in \cite{S87}. This approach was further enriched with the introduction of Positive Topology by Sambin in \cite{somepoint}.  A  remarkable novelty of this approach to constructive topology
was the advent of  inductive  topological methods (see
\cite{CSSV03,cms13}) to represent the point-free topologies  of the real number line, 
 of Cantor space and of Baire space.

However, while the basic notions of Formal Topology 
can be formalized in the Minimalist Foundation in \cite{m09},  not all the constructions of  inductively generated formal topologies  are formalizable there.
For instance,  the formal topology of Cantor space and that of the real line are formalizable in \mf\  thanks to some characterizations given in \cite{silviobar},
but the formal topology of the Baire space is not thanks to results in \cite{CR}.

In any case this limitation is not a  surprise  since \mf\ was introduced to be a minimalist foundation compatible with the most relevant
constructive and classical foundations for mathematics in the literature in the sense of being interpretable in them by preserving
the meaning of logical and set-theoretic onstructors.

To better fulfill this requirement  \mf\  was  ideated in \cite{mtt} as a  two level system which was completed in \cite{m09}.
More precisely the two-level structure of \mf\ consists of 
an intensional level, called \mtt\ and based on an intensional type theory \`a la Martin-L{\"o}f,
aimed at exhibiting the computational contents of ma\-the\-ma\-tical
proofs, and 
an extensional level formulated in a language as close as possible to that
of present day mathematics which is interpreted in the intensional level by means
of a quotient model (see \cite{m09}).
In particular, the intensional level \mtt\ is quite weak in proof-theoretic strength because it can be  interpreted in the  fragment of Martin-L{\"o}f's type
theory with one universe, or directly in Feferman's theory of non-iterative fixpoints \iduh\ as first shown in \cite{ms16}.

The  authors  in \cite{mtt} were led to propose a two-level system as a notion of constructive foundation for the following reason.
They wanted to found   constructive mathematics in a  system  interpretable in an extension of Kleene realizability semantics
of intuitionistic arithmetics with finite types. The  reason is that  such a semantics makes evident the extraction of witnesses from existential statements
in a computable way thanks to the validity of the axiom of choice ($\ac$) and the formal Church's thesis ($\ct$).
In more detail,  $\ac$ states
 that  from any total relation
we can extract a type-theoretic function as follows:
\[\tag{\bf AC}(\forall x\in A)\,(\exists y\in B)\,R(x,y)\rightarrow (\exists f\in A\rightarrow B)\,(\forall x\in A)\,R(x,\mathsf{Ap}(f,x))\]
with $A$ and $B$ generic collections and $R(x,y)$ any relation,
while $\ct$ (see also ~\cite{DT88}) states that from any total relation on natural numbers
we can extract a (code of a) recursive function by using the Kleene predicate $T$ and the extracting function $U$
 \[\tag{\bf CT} (\forall f\in \mathsf{N}\rightarrow\mathsf{N})(\exists e\in \mathsf{N})\,(\forall x\in \mathsf{N})\,(\exists z\in \mathsf{N})\,(T(e,x,z)\wedge \mathsf{Ap}(f,x)=_\mathsf{N}U(z)).\] 
As a consequence, their desired  constructive foundation should have been consistent  with $\ac$ and $\ct$. But this consistency requirement is a very strong property since it rules out the validity of extensional principles used in everyday practice of mathematics,
including the extensional equality of functions, beside not being satisfied by most constructive foundations in the literature. Therefore, the authors of \cite{mtt}  ended up in defining a constructive foundation as a {\it two-level system} consisting of an  {\it extensional level}, formulated
in a language close to that 
of informal mathematics and
validating  all the desired extensional properties, and of an {\it intensional level}
consistent with the axiom of choice $\ac$ and  $\ct$ where the extensional level is interpreted
via a quotient model. The system \mf\ is an example of this notion of  two-level constructive foundation. The proof that its intensional level is consistent with $\ac$ and $\ct$ was given
only recently in \cite{IMMS}.

The purpose of this paper is two-fold.
First we present an extension \mfind\ of \mf\ with the inductive 
definitions sufficient to inductively generate  formal topologies and necessary to define  
  {\it inductively generated basic covers}  which constitute a predicative presentation of suplattices (see \cite{batsampre,cms13}). 
This is motivated by the fact that in \cite{CSSV03}
the problem of generating
formal topologies inductively is reduced to that of generating
suplattices in an inductive way. As \mf\ also \mfind\ is presented as a two-level system obtained by extending each level of \mf\
with rules generating basic covers inductively.  In particular, the  rules of inductively generated basic covers added to \mtt\ to form  the intensional level \mttind\  are driven  by those of well-founded sets in Martin-L{\"o}f's type theory in \cite{PMTT} without assuming generic well-founded sets or ordinals as done for the formalization  of such covers within type theory 
in \cite{CSSV03,silviobar}. 

Then our main purpose is to show that the extension  \mfind\  of \mf\  is also an example of the mentioned notion of two-level constructive foundation in \cite{mtt}. To this aim the key result shown here is
 that
 its intensional level \mttind\ is consistent with $\ac$ and $\ct$.

In order to meet our goal we produce a  realizability semantics
for \mttind\ by extending the one used to show the consistency of the intensional level of \mf\  with \ac+\ct\  in \cite{IMMS}, which in turn extends Kleene realizability interpretation of intuitionistic arithmetic.

A main novelty of the semantics in this paper is that it is formalized
in a constructive theory as the (generalized) predicative  set theory  \czfrea, namely Aczel's constructive Zermelo-Fraenkel set theory extended with the regular extension axiom  {\bf REA}.

To this purpose it is crucial to modify the realizability interpretation
in \cite{IMMS} in the line of the  realizability interpretations of Martin-L\"of type theories in extensions of Kripke-Platek set theory introduced 
 in \cite{R93} (published as \cite{RG94}).

Therefore, contrary to the semantics in \cite{IMMS}, which was formalized in a classical theory
as Feferman's theory of non-iterative fixpoints \iduh, here
we produce a proof that \mttind, and hence $\mtt$, is {\it constructively}
consistent with \ac+\ct.


 As in \cite{IMMS}, we actually build a realizability model for
a fragment of  Martin-L{\"o}f's type theory~\cite{PMTT}, called \mluif, 
where \mttind\ extended with the axiom of choice can be easily interpreted.

As it turns out, $\mathbf{CZF}+\mathbf{REA}$ and \mluif\ possess the same proof-theoretic strength.

In the future we intend to further extend our realizability to model
\mttind\ enriched with coinductive definitions to represent Sambin's
generated Positive Topologies.
Another possible line of investigation would be to employ our realizability
semantics to establish the consistency strength of \mttind\ or  of the extension of \mtt\ with particular
inductively generated topologies, like that of the Baire space.

\section{The extension \mfind\  with inductively generated   formal topologies}\label{mfind}
Here we describe the extension \mfind\ of  \mf\
capable of formalizing all the
examples of formal topologies defined by inductive methods
introduced in \cite{CSSV03}.

In that paper, the problem of generating the minimal 
formal topology which satisfies some given axioms
is reduced to show how to generate a complete suplattice
in terms of an infinitary relation called {\bf basic cover relation}
$$a\cov V$$
between elements $a$ of a set $A$, thought of as {\it basic opens}, and subsets $V$ of $A$, meaning that {\it the basic open $a$ is covered by
    the union of basic opens in the subset $V$}.

Then the elements of the generated suplattice would be fixpoints of the associated {\it closure operator}
$$\cov(-) : \mathcal{ P}(A)\ \longrightarrow \  \mathcal{ P}(A)$$
defined by putting
$$\cov(V)\, \equiv\, \{ \ x\in A\ \mid\ x\cov
V\ \}$$
These suplattices are {\it complete} with respect to families of subsets {\it indexed over a set}.

Furthermore,  a formal topology is defined as a basic cover relation
satisfying a convergence property and a positivity predicate (see \cite{CSSV03,mv04,batsampre,cms13}). Indeed in this case the resulting complete suplattice of $\cov$-fixpoints actually forms
a {\it predicative locale} which is {\it overt} (or {\it open} in the original terminology by Joyal and Tierney) for the presence of the positivity predicate.

The tool of  basic covers appears to be the only one available in the literature to represent  complete suplattices in
 most-relevant  predicative constructive foundations 
including Aczel's {\bf CZF}, Martin-L{\"o}f's type theory and also \mf.

The reason is that
{\it there exist no non-trivial examples of complete suplattices that form a set} in such predicative foundations
(see \cite{Cu10}). As a consequence, there exist no non-trivial examples
of locales which form a set and the approach of formal topology based
on a cover relation seems to be compulsory (see also \cite{whyp})
when developing topology in a constructive predicative foundation, especially
in \mf.

In \cite{CSSV03} it 
was introduced a method for generating basic covers inductively
starting from {\it an indexed set of axioms}, called {\it axiom set}.
Such a method  allows
to generate  a formal topology inductively when the
basic  cover relation $\cov$  is defined on  {\it a preordered set $(A, \leq)$ }
and it is generated by an axiom set   satisfying a so called {\it localization condition} which refers to the preorder defined on $A$.
 A general  study of the relation between basic covers and formal covers including their inductive generation is given in \cite{cms13}.\\

In the following we describe
a suitable extension of \mf\ capable of representing inductively generated
basic covers, and hence also formal topologies.

We start by describing how to enrich the extensional level  \emtt\ of \mf\ in \cite{m09} with such inductive basic covers.
The reason is that the language of \emtt\  is more apt to represent the topological axioms given that it is very close to that of everyday mathematical
practice (with proof-irrelevance of propositions and an encoding of the usual language of  first-order arithmetic and  of subsets of a set, see \cite{m09}).

We recall that in \emtt\ we have four kinds of types, namely
{\bf collections}, {\bf sets},  {\bf propositions} and {\bf small propositions}
according to the following subtyping relations:
$$
\xymatrix@C=4em@R=1em{
 {\mbox{\bf small propositions}}\ar@{^{(}->}[dd]\ar@{^{(}->}[rr]&  &
 {\mbox{\bf sets}}\ar@{^{(}->}[dd] \\ 
&&\\
  {\mbox{\bf propositions}}\ar@{^{(}->}[rr] && {\mbox{\bf collections}}
}
$$
where collections  include the power-collection
${\mathcal P}(A)$ (which is not  in general a set!)  of  any set $A$ and small propositions are defined as those propositions closed
under intuitionistic connectives, propositional equality and quantifiers restricted to sets.

We first  extend \emtt\ with new primitive {\it small propositions}
$$a\triangleleft_{I,C} V\ prop_s$$
 expressing that {\it the basic open $a$ is covered by the union of basic opens in $V$}
 for any $a$ element of a {\it set} $A$, $V$ subset of $A$,
 assuming that the basic  cover is
generated by a family
of (open) subsets  of $A$
indexed on  a family of  sets $I(x)\ set \ [x\in A]$ and represented by
  $$C(x,j) \in {\mathcal P}( A)\ [x\in A, j\in I(x)]. $$

Recall from \cite{m09} that in \emtt\ we can define a {\it subset membership}
$$a\,\epsilon\, V$$
between a subset $V\in  {\mathcal P}( A)$ and an element $a$  of a set $A$.
Note that while the membership $a\in A$ is {\it primitive} in \emtt\
 and expresses that $A$ is the type of $a$,  the membership $\epsilon$ essentially expresses a relation
 between subsets of $A$ and its elements.

 The precise rules extending \emtt\ to form a new type system \emttind\ are the following:

  {\bf Rules of inductively generated basic covers in \emttind}
  $$\begin{array}{l}
\mbox{\rm F-$\triangleleft$} \
\displaystyle{\frac{\begin{array}{l}
      A \ set \ \ \ \  I(x)\  set \  [x\in A] \ \ \ C(x,j) \in {\mathcal P}( A)\ \ [x\in A, j\in I(x)]\\
      V\in {\mathcal P}( A)\qquad a\in A\ \end{array} }
  {\displaystyle a\triangleleft_{I,C} V\ prop_s }}\\[20pt]
\mbox{\rm rf-$\triangleleft$} 
\displaystyle{\frac{\begin{array}{l}
      A \ set \ \ \ \  I(x)\  set \  [x\in A] \ \ \ C(x,j)\in {\mathcal P}( A) \ \ [x\in A, j\in I(x)]\\
   V\in {\mathcal P}( A)
   \qquad\qquad   a\,\epsilon\, V\ true \end{array} }
  {\displaystyle a\triangleleft_{I,C} V\ true}}\\
\\
\mbox{\rm tr-$\triangleleft$} \
\displaystyle{
  \frac{\begin{array}{l} A\ set \ \ \ \  I(x)\  set \  [x\in A] \ \ \ C(x,j)\in {\mathcal P}( A)  \ \ [x\in A, j\in I(x)]\\
      a\in A\qquad i\in I(a)\qquad  \qquad  V\in {\mathcal P}( A)\\
       ( \forall_{y\in A} ) \  (\  y\, \epsilon\, C(a,i)\  \rightarrow
         \  y\triangleleft_{I,C} V\ )\ true\end{array} }
  {\displaystyle a\triangleleft_{I,C} V \  true }}
\\[20pt]
\mbox{ind-$\triangleleft$} 
\displaystyle{\frac{ 
    \begin{array}{l}    A\ set \ \ \ \  I(x)\  set \  [x\in A] \ \ \
      C(x,j)\  \in {\mathcal P}( A)\ \ [x\in A, j\in I(x)]\\
           P(x)\  prop\ [x\in A]\qquad V\in {\mathcal P}( A) \qquad
        \mathsf{cont}(V,P)\ true\ \\
a\in A\qquad a \cov_{I,C} V\ true \end{array} }{ P(a)\ true\       }}
\end{array}$$
    where
    $$\begin{array}{rl}
      \mathsf{cont}(V,P)\ \equiv\ & \forall_{x\in A}\ (\ x\,\epsilon\, V\ \rightarrow \ P(x) \ )\ \\
      & \qquad \wedge\ \forall_{x\in A}\ \ (\ \forall_{j \in I(x)}\  \forall_{y\in A}\ (y\,\epsilon \, C(x,j)\ \rightarrow \ P(y) \ ) \ \rightarrow\ P(x)\ ) 
    \end{array}$$
    \\

    \noindent where above we adopted
    the convention of writing $\phi\ true$ for a proposition $\phi$
    instead of $\mathsf{true}\in \phi$ as in \cite{m09}.

The first rule expresses the formation of the small propositional function representing a basic cover, the second one expresses a form of reflexivity since it states that any element of a subset $V$ is covered by it,
 the third one expresses a form of transitivity property  applied to open subsets of the axiom set and the fourth one is a form of induction on the generated basic cover.

    A main example of inductively generated cover formalizable in \emttind\  is   that of  the topology of the {\it real line}, represented by
    Joyal's inductive formal cover $\cov_{r}$ of Dedekind real numbers defined
    on the set $ \mathbb Q\, \times\, \mathbb Q$ 
    (which acts as $A$ in the rules above) where $\mathbb Q$ is the set of {\it rational numbers}. This formal cover is generated by a family of  open subsets 
$C(\langle p,q\rangle ,  j )$ for $\langle p,q\rangle \in \mathbb Q\, \times\, \mathbb Q$ and  $j\in I(\langle p,q\rangle)$   where  $I(\langle p,q\rangle)$  is the set of indexes of the following rules:\\
   
\begin{center}
    \begin{tabular}{cc}
  ${\displaystyle\frac{q\leq p}{\langle p,q\rangle\cov_{r} U}}$
\qquad &
${\displaystyle\frac{p'\leq p<q\leq q' \qquad \langle p',q'\rangle\cov_{r} U}
                     {\langle p,q\rangle\cov_{r} U}}$\\[20pt]
 \qquad ${\displaystyle
\frac{p\leq r < s\leq q\qquad \langle p,s\rangle\cov_{r} U \quad \langle r,q\rangle\cov_{r} U }
     {\langle p,q\rangle\cov_{r} U}}$ & $ \ {\displaystyle\frac {\mathsf{wc}(\langle p,q\rangle)\cov_{r} U}{\langle p,q\rangle\cov_{r} U}}$
\end{tabular}
    \end{center}

\noindent
where in the last axiom we have used the abbreviation
$$
\mathsf{wc}(\langle p,q\rangle)\equiv \{\, \langle p',q'\rangle\in \mathbb Q\, \times\, \mathbb Q\ \mid\  p<p'< q'<q\}
$$ 
($\mathsf{wc}$ stands
for `well-covered').  

Some relevant applications 
regarding the formal topology of the real line 
are discussed for instance in \cite{heine,Palmgren05continuityon,whyp}.
\\

There exists also an alternative presentation of the formal topology of the real line due to T. Coquand
in \cite{heine}  which makes this topology formalizable in \emtt, and hence in \mf. The reason is that  its cover can be defined in terms of  another inductive cover generated by a finite set of axioms  and  formalizable in \emtt\  by using a characterization of inductive formal topologies given in \cite{silviobar} and  the crucial fact that  in \emtt\ we can define sequences of subsets
by recursion on natural numbers.

Other relevant examples of inductively generated formal topologies are the topologies of Cantor and Baire spaces.
These are instances of the more general notion of  {\it tree formal topology relative to a set $E$} represented by  the cover $\triangleleft_{tr(E)}$
on the set  $\mathsf{List}(E)$ of lists of $E$   generated by a family of open subsets $C( l,  j )$ for a list $l$ and $j\in I(h)$ where $I(h)$ is the set of indexes of the following rules:
\\

\begin{center}
    \begin{tabular}{l}
$
\displaystyle{\frac{ s\sqsubseteq l    \qquad l\triangleleft_{tr(E)} V  }
{\displaystyle   s\triangleleft_{tr(E)}  V }}$
\qquad
$
\displaystyle{\frac{ \forall{x\in E}\ \ [l,x]\triangleleft_{tr(E)} V }
{\displaystyle l\triangleleft_{tr(E)} V }}$
\end{tabular}
\end{center}

\noindent
where  $s\sqsubseteq  l$  means that the list $l$ is an initial segment of  the list $s$, formally defined as
$s\sqsubseteq  l\,\equiv\, \exists_{t\in \mathsf{List}(E)}\, 
\mathsf{Id} (\mathsf{List}(E), s, [ l , t] )$ and $[l,t]$ is the concatenation of the list $l$ with $t$.

Then  the {\it formal  topology of the Cantor space} is the tree topology $\triangleleft_{tr(\{0,1\} )}$ 
 when $E$ is the boolean set $\{0,1\}$ and the {\it formal topology of  the Baire space} is the tree topology $\triangleleft_{tr( \mathsf{N})}$
when $E$ is the set  $\mathsf{N}$ of natural numbers.

The formal topology of the  Baire space is a genuine example of inductively generated cover  definable in \emttind\ but not in \emtt\ contrary to that  of the  Cantor space which is definable in \emtt:
\begin{prop}
The formal topology of the  Baire space $\triangleleft_{tr( \mathsf{N})}$ is not formalizable in \emtt, and hence in \mf,
while the formal  topology of Cantor space $\triangleleft_{tr(\{0,1\} )}$ is formalizable there. \end{prop}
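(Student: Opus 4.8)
The plan is to establish the two halves of the statement separately. For the positive half --- that $\triangleleft_{tr(\{0,1\})}$ is formalizable in \emtt\ --- I would first view this cover as the one inductively generated, over the preordered set $(\mathsf{List}(\{0,1\}),\sqsubseteq)$, by the axiom set whose only genuinely infinitary clause is the binary branching rule, so that $I(l)$ is a one-element set with $C(l,\star)=\{[l,0],[l,1]\}$; the first tree rule is then a consequence of the localization condition --- which this axiom set satisfies with respect to $\sqsubseteq$ --- and need not be coded into $C$. The point is that the resulting axiom set is finitary: every $C(l,j)$ is a finite set and every $I(l)$ is finitely enumerable. I would then invoke the characterization of inductively generated covers given in \cite{silviobar}, which applies in this finitary situation and yields an explicit definition inside \emtt. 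Concretely, using the fact recalled above that in \emtt\ one can define sequences of subsets of a set by recursion on $\mathsf{N}$, I would put $\triangleleft^{0}V:=\{\,a\mid(\exists v\,\epsilon\,V)\ a\sqsubseteq v\,\}$ and $\triangleleft^{n+1}V:=\triangleleft^{n}V\cup\{\,a\mid[a,0]\,\epsilon\,\triangleleft^{n}V\ \wedge\ [a,1]\,\epsilon\,\triangleleft^{n}V\,\}$, and then define $a\triangleleft V\ \equiv\ (\exists n\in\mathsf{N})\ a\,\epsilon\,\triangleleft^{n}V$, which is a small proposition. What remains is to check that F-$\triangleleft$, rf-$\triangleleft$, tr-$\triangleleft$ and ind-$\triangleleft$ become derivable for this definition: rf-$\triangleleft$ holds since $V\subseteq\triangleleft^{0}V$; tr-$\triangleleft$ since two premises $[a,0]\triangleleft V$, $[a,1]\triangleleft V$ living at stages $n$ and $m$ put $a$ at stage $\max(n,m)+1$, the localization premises being already absorbed into $\triangleleft^{0}$; and ind-$\triangleleft$ by ordinary induction on $n$, proving $(\forall a)(a\,\epsilon\,\triangleleft^{n}V\to P(a))$ from $\mathsf{cont}(V,P)$ together with the finiteness of the branching at each node. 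The conceptual reason $\mathsf{N}$ is enough is that for bounded branching the operator generating the cover is $\omega$-continuous, so its least fixed point is already reached at stage $\omega$ --- and this is exactly what fails in the Baire case.

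For the negative half --- that $\triangleleft_{tr(\mathsf{N})}$ is not formalizable in \emtt\ --- I would argue proof-theoretically. With $\mathsf{N}$-ary branching, the clause $(\forall x\in\mathsf{N})\,[l,x]\triangleleft V\Rightarrow l\triangleleft V$ makes the generating operator non-continuous, its least fixed point not attained at any recursive stage, and ind-$\triangleleft$ for $\triangleleft_{tr(\mathsf{N})}$ is precisely the induction principle of this arithmetic but genuinely infinitary inductive definition --- in essence, induction over the inductively generated well-founded $\mathsf{N}$-branching trees with leaves determined by $V$. So no definition by recursion on $\mathsf{N}$ can reconstruct it; and, more to the point, I would appeal to \cite{CR}: were there a small proposition of \emtt\ satisfying F-, rf-, tr- and ind-$\triangleleft$ for $\triangleleft_{tr(\mathsf{N})}$, then \emtt\ would derive the full induction principle of that inductive definition, and this raises the proof-theoretic strength beyond that of Feferman's theory \iduh. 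Since \mf, and in particular its extensional level \emtt, has proof-theoretic strength bounded by that of \iduh\ (by \cite{m09,ms16}), such a definition would present \mf\ with a conservative extension of strictly greater strength, a contradiction. Hence $\triangleleft_{tr(\mathsf{N})}$ is not formalizable in \emtt, and a fortiori not in \mf.

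The step I expect to be the main obstacle is the negative half: one has to exclude \emph{every} possible \emtt-definition of $\triangleleft_{tr(\mathsf{N})}$, not merely the naive stage-by-stage one, and the only route I see for that is the proof-theoretic lower-bound analysis of \cite{CR}, which no purely syntactic argument inside \emtt\ can replace. By comparison the positive half is largely bookkeeping once the characterization of \cite{silviobar} is available, the one mildly delicate point being the derivation of ind-$\triangleleft$ by induction on the generation stage.
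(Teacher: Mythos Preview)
Your positive half (Cantor) follows the paper's approach: both rely on the characterization in \cite{silviobar} together with the availability in \emtt\ of sequences of subsets defined by recursion on $\mathsf{N}$; you simply spell out the stage construction explicitly.

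For the negative half (Baire) you take a genuinely different route. The paper's argument is a direct \emph{translation} argument: by \cite{m09}, \emtt\ admits a meaning-preserving interpretation into $\mathbf{CZF}$, so if $\triangleleft_{tr(\mathsf{N})}$ were formalizable in \emtt\ it would be formalizable in $\mathbf{CZF}$; but \cite{CR} shows precisely that it is not formalizable in $\mathbf{CZF}$. Your argument instead runs through proof-theoretic strength: \emtt\ is bounded by \iduh, while the Baire induction principle forces strength beyond \iduh. This is correct in spirit, but your invocation of \cite{CR} is slightly misplaced: \cite{CR} is a result about $\mathbf{CZF}$, not about \iduh, so it does not \emph{directly} supply the lower bound you need (the bound is true, and follows from standard facts about the strength of a single accessibility-style inductive definition, which sits at the level of $\mathbf{ID}_1>\iduh$, but that is not what \cite{CR} states). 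The paper's route buys simplicity and a clean use of \cite{CR}; your route makes the obstruction more quantitative but requires either a different reference for the \iduh\ lower bound or, implicitly, the very \emtt$\to\mathbf{CZF}$ step that the paper invokes. Also, your phrase ``conservative extension of strictly greater strength'' is infelicitous: if the cover were definable in \emtt\ there would be no extension at all; the contradiction is simply that \emtt\ would then prove a principle that a system of strength $\leq\iduh$ cannot.
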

 \begin{proof}
 The formal topology of the Cantor space  $\triangleleft_{tr(\{0,1\} )}$   is formalizable in \emtt\ 
thanks to the characterization  of  $\triangleleft_{tr(\{0,1\} )} (V)$ for a subset $V$ of  $\mathsf{List}(\{0,1\})$  in \cite{silviobar}, beside the fact that in \emtt\
we can define sequences of subsets of a set by recursion on natural numbers.

Instead the formal  topology of the Baire space $\triangleleft_{tr( \mathsf{N})}$  is not formalizable in \emtt\  for the following reason.
From \cite{m09} we know that \emtt\ can be translated in  $\mathbf{CZF}$ by preserving the meaning of propositions and sets.
Now if the topology $\triangleleft_{tr( \mathsf{N})}$  were formalizable in \emtt\ then it would be formalizable also in  $\mathbf{CZF}$ but from \cite{CR} we know that this is not possible.
\end{proof}

More generally, all the finitary topologies in \cite{CSSV03} are definable  in \emtt, and hence in \mf,  in a way similar to what done to formalize the
 formal topology of the Cantor space.
\\

It is worth noting that different presentations of basic covers
may yield to the same complete suplattice.
For example,  any complete suplattice 
presented by  (the collection of fixpoints
associated to) a basic cover $\cov_{I,C}$  on  a quotient set  $B/R$,
can be equivalently presented by   a cover on the set $B$ itself
which behaves like $\cov_{I,C}$  but in addition 
it considers as equal opens those elements which are related by $R$.

In order to properly show this fact, which will motivate some definitions in the next section,
we define a correspondence between subsets of $B/R$
    and subsets of $B$ as follows:
    \begin{definition}
      In \emttind, 
      given  a quotient set $B/R$, for any subset $W\in \mathcal{P}(B/R)$
      we define
      $$\mathsf{es}(W)\, \equiv \, \{\ b\in B\ \mid\  [b]\,\epsilon\, W\     \}$$
        and given any $V\in  \mathcal{P}(B)$ we define $\mathsf{es}^{-}(V)\, \equiv \, \{\ z\in B/R\ \mid\  \exists_{b\in B}\ (\ b\,\epsilon\, V\ \wedge\ z=_{B/R}[b] \ )\}$.
    \end{definition}
    \begin{definition}\label{indquo}
Given an axiom set  represented by a set $A\,\equiv\, B/R$ with
$ I(x)\  set\  [x\in A]$ and $  C(x,j)\in {\mathcal P}(A) \ \ [x\in A, j\in I(x)]$, we define a new axiom set as follows:
$$\begin{array}{l}
 A^R\, \equiv\, B\qquad \qquad I^R(x)\, \equiv\, I([x]) + \ (\Sigma y\in B)\ R(x,y) \qquad \qquad \mbox{ for } x\in B\\
\end{array}
$$
where $C^R(b,j)$ is the formalization of
$$C^R(b,j)\, \equiv\, \begin{cases}
  \mathsf{es}(\, C([b],j)\, ) &  \mbox{ if } j\in  I([b])\\
  \{\, \pi_1(j) \, \} &  \mbox{ if } j\in (\Sigma y\in B)\ R(b,y) \\
\end{cases}$$
for $b\in B$ and $j\in I^R(x)$.

\noindent We then call $\cov_{I,C}^R$ the inductive basic cover generated from this axiom set.
      \end{definition}
    It is then easy to check that
    \begin{lemma}\label{eqcov}
      For any axiom set in \emttind\ represented by a set $A\,\equiv\, B/R$ with
      $ I(x)\  set\  [x\in A]$ and $  C(x,j)\in {\mathcal P}(A) \ \ [x\in A, j\in I(x)]$, the suplattice defined by $\cov_{I,C}$ is isomorphic
      to that defined by $\cov_{I,C}^R$ by means of an isomorphism of suplattices.
\end{lemma}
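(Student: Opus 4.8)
The plan is to build the isomorphism out of the two maps $\mathsf{es}$ and $\mathsf{es}^{-}$ introduced just above Definition~\ref{indquo}. Recall that the suplattice attached to a basic cover is the collection of its fixpoints ordered by inclusion, with the join of a set-indexed family $\{V_{i}\}_{i}$ given by $\cov(\bigcup_{i}V_{i})$. Hence, as soon as we know that $\mathsf{es}$ and $\mathsf{es}^{-}$ restrict to mutually inverse, monotone bijections between the $\cov_{I,C}$-fixpoints in $\mathcal{P}(A)$ and the $\cov_{I,C}^{R}$-fixpoints in $\mathcal{P}(B)$, we are done: an order-isomorphism between two complete lattices preserves all set-indexed joins, hence is an isomorphism of suplattices. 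So the real content is that $\mathsf{es}$ and $\mathsf{es}^{-}$ send fixpoints to fixpoints and are inverse to one another there; monotonicity with respect to inclusion is immediate from the definitions.

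The key lemma I would establish first is the equivalence, for every $b\in B$ and $V\in\mathcal{P}(A)$,
$$[b]\cov_{I,C}V\ \Longleftrightarrow\ b\cov_{I,C}^{R}\mathsf{es}(V),$$
proved by a pair of inductions. For ``$\Rightarrow$'' I would induct on $\cov_{I,C}$ via rule ind-$\triangleleft$ with the $R$-closed predicate $P(z)\equiv\forall_{b\in B}\,(z=_{A}[b]\rightarrow b\cov_{I,C}^{R}\mathsf{es}(V))$: to verify $\mathsf{cont}(V,P)$, the reflexivity clause follows since $x\,\epsilon\,V$ with $x=_{A}[b]$ gives $b\,\epsilon\,\mathsf{es}(V)$, hence $b\cov_{I,C}^{R}\mathsf{es}(V)$ by rf-$\triangleleft$; and the inductive clause at $x=_{A}[b]$ follows from tr-$\triangleleft$ for $\cov^{R}$ at $b$ along the index $\mathsf{inl}(j)\in I^{R}(b)$ for $j\in I([b])$, using $C^{R}(b,\mathsf{inl}(j))=\mathsf{es}(C([b],j))$, so that a $y\in B$ with $y\,\epsilon\,C^{R}(b,\mathsf{inl}(j))$ has $[y]\,\epsilon\,C([b],j)$ and $P([y])$ supplies $y\cov_{I,C}^{R}\mathsf{es}(V)$. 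For ``$\Leftarrow$'' I would induct on $\cov_{I,C}^{R}$ over $B$ with $Q(b)\equiv[b]\cov_{I,C}V$: in verifying $\mathsf{cont}(\mathsf{es}(V),Q)$ the reflexivity clause is immediate, and in the inductive clause at $b$ one case-splits on $I^{R}(b)=I([b])+(\Sigma y\in B)\,R(b,y)$ — on the left summand one applies tr-$\triangleleft$ for $\cov_{I,C}$ at $[b]$ along $j\in I([b])$, since every $z\,\epsilon\,C([b],j)$ is $[y]$ with $y\,\epsilon\,C^{R}(b,\mathsf{inl}(j))$ and the hypothesis gives $z\cov_{I,C}V$; on the right summand, where $C^{R}(b,j)=\{\pi_1(j)\}$ and $R(b,\pi_1(j))$ holds, one uses $R(b,\pi_1(j))\to[b]=_{A}[\pi_1(j)]$ to turn $[\pi_1(j)]\cov_{I,C}V$ into $[b]\cov_{I,C}V$.

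From this equivalence, together with surjectivity of the quotient map (every $z\in B/R$ equals some $[b]$), one reads off that $V$ is a $\cov_{I,C}$-fixpoint iff $\mathsf{es}(V)$ is a $\cov_{I,C}^{R}$-fixpoint. It then remains to check the bijection: $\mathsf{es}^{-}(\mathsf{es}(V))=V$ holds for all $V$ by unfolding the definitions plus surjectivity of the quotient map and well-definedness of subsets of $B/R$; and $\mathsf{es}(\mathsf{es}^{-}(U))=U$ holds whenever $U$ is a $\cov_{I,C}^{R}$-fixpoint, since $\mathsf{es}(\mathsf{es}^{-}(U))=\{\,c\in B\mid\exists_{b\in B}\,(b\,\epsilon\,U\wedge R(c,b))\,\}$ trivially contains $U$, while conversely if $b\,\epsilon\,U$ and $R(c,b)$ then the right-summand index $\mathsf{inr}(\langle b,p\rangle)\in I^{R}(c)$ has $C^{R}(c,\mathsf{inr}(\langle b,p\rangle))=\{b\}$, so tr-$\triangleleft$ for $\cov^{R}$ gives $c\cov_{I,C}^{R}U$ and the fixpoint property gives $c\,\epsilon\,U$. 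Combined with the evident monotonicity of $\mathsf{es}$ and $\mathsf{es}^{-}$, this produces the order-isomorphism of the two complete lattices of fixpoints, hence the required isomorphism of suplattices.

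I expect the main obstacle to be the ``$\Rightarrow$'' direction of the displayed equivalence: the predicate of the induction on $\cov_{I,C}$ must be the $R$-closed $P(z)\equiv\forall_{b\in B}\,(z=_{A}[b]\rightarrow\dots)$ rather than the naive ``$b\cov_{I,C}^{R}\mathsf{es}(V)$'', because the latter is not stable under change of representative, and one must be careful with the coproduct structure of $I^{R}$ and the injection bookkeeping when invoking tr-$\triangleleft$ for $\cov^{R}$. Everything else is routine: effectiveness of the quotient ($R(a,b)\leftrightarrow[a]=_{B/R}[b]$) and surjectivity of the quotient map, both available in \emttind, together with straightforward unfolding of Definition~\ref{indquo} and of the definitions of $\mathsf{es}$ and $\mathsf{es}^{-}$.
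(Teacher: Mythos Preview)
Your proposal is correct and follows essentially the same approach as the paper: both use the maps $\mathsf{es}$ and $\mathsf{es}^{-}$ to set up the bijection between fixpoints, and the paper also notes that suprema are preserved. The paper's proof is really only a sketch (``it is immediate to check \ldots''), so your intermediate equivalence $[b]\cov_{I,C}V\Leftrightarrow b\cov_{I,C}^{R}\mathsf{es}(V)$, established by the two inductions you describe, is exactly the natural way to flesh out that sketch; the paper additionally mentions, as an alternative, verifying that the relation $z\,F\,b\equiv\mathsf{Id}(B/R,z,[b])$ is a basic cover isomorphism in the sense of \cite{cms13}, which you do not pursue but need not.
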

        \begin{proof}
        It is immediate to check that for any subset  $W$ of $B/R$
        which is a fixpoint for $\cov_{I,C}$ the subset
        $\mathsf{es}(W)$ is a fixpoint for $\cov_{I,C}^R$ and that, conversely,
        for any subset  $V$ of $B$
        which is a fixpoint for $\cov_{I,C}^R$ the subset 
        $\mathsf{es}^{-}(V)$ is a fixpoint for $\cov_{I,C}$. Moreover, this correspondence
        preserves also the suprema defined as in \cite{cms13}.
        Alternatively, one could check that the relation
        $z\, F\, b\, \equiv\  \mathsf{Id} (\, B/R\, ,\,  z\, ,\, [b]\, )$, namely the propositional equality of $z$ with $[b]$,
        defines a basic cover isomorphism in the sense of \cite{cms13}
        between the basic cover $\cov_{I,C}$ and $\cov_{I,C}^R$.\end{proof}

\subsection{The intensional level \mttind}
Here we describe the extension \mttind\ of the intensional level \mtt\ of \mf\
capable of interpreting the extension \emttind.

We recall that in \mtt\ we have the same four kinds
of types as in \emtt\  with the difference that  in \mtt\ power-collections of sets
are replaced by the existence of a  {\it collection of small propositions} $\mathsf{prop_s}$
and function collections $A\rightarrow \mathsf{prop_s}$ for any set $A$.
Such collections are enough to interpret power-collections of sets in \emtt\  within a quotient model of dependent extensional types
built over \mtt, as explained
in \cite{m09}.

In order to complete \emttind\  into  a two-level foundation according to the requirements in \cite{mtt}, we need to define an extension of \mtt\ with
a proof-relevant version of 
 the inductively generated basic covers of \emttind.
 
 To this purpose we defined the extension \mttind\ by extending  \mtt\ in \cite{m09}  with  new small propositions
$$ a\triangleleft_{I,C} V\ prop_s$$
and corresponding  new proof-term constructors associated to them so that
judgements asserting that some proposition is true  in \emttind\ 
are turned into   judgements of \mttind\
producing a proof-term of the corresponding proposition.

We recall that  in \mttind\ as in \mtt\ in \cite{m09}  the universe of small propositions  is defined  in the version  \`a la Russell.
A version  of \mtt\ with the universe of small propositions  \`a la Tarski can be found  in \cite{ms16}. 

 It is worth noting that the
equality rules of the inductive basic covers 
are driven  by those of well-founded sets in Martin-L{\"o}f's type theory in \cite{PMTT} without assuming generic well-founded sets or ordinals as in the representations given
in \cite{CSSV03,silviobar}. However, in accordance
with the idea that proof-terms of propositions of \mtt\ represent just
a constructive rendering of the proofs of propositions in \emtt, we do restrict the elimination rules
of inductive basic covers
to act toward propositions which do not depend on their proof-terms, since these
proof-terms do not appear at the extension level  \emtt.

When expressing the rules of inductive basic covers
 we use the abbreviation $$ a\,\epsilon\, V \qquad \mbox{ to mean }\qquad \mathsf{Ap}(\, V\, , \, a\, )$$
for any set $A$,  any small propositional function $ V\in A\rightarrow\mathsf{prop_s}$ and any element $a\in A$.
\\

The precise rules of inductive basic covers extending \mtt\ to form a new type system \mttind\ are the following:

  {\bf Rules of inductively generated basic covers in \mttind}
  \\
  $$\begin{array}{l}
\mbox{\rm F-$\triangleleft$} \
\displaystyle{\frac{\begin{array}{l}
      A \ set \ \ \ \  I(x)\  set \  [x\in A] \ \ \ C(x,j) \in A\rightarrow 
      \mathsf{prop_s}\ [x\in A, j\in I(x)]\\
        V\in A\rightarrow\mathsf{prop_s}\qquad a\in A \end{array} }
  {\displaystyle a\triangleleft_{I,C} V\ prop_s }}\\[20pt]
\mbox{\rm rf-$\triangleleft$} \
\displaystyle{\frac{\begin{array}{l}
      A \ set \ \ \ \  I(x)\  set \  [x\in A] \ \ \ C(x,j) \in A \rightarrow \mathsf{prop_s} \ \ [x\in A, j\in I(x)]\\
        V\in A \rightarrow \mathsf{prop_s}
   \qquad a\in A\qquad r\in   a\,\epsilon\, V\ \end{array} }
  {\displaystyle \mathsf{rf}(a,r)\in a\triangleleft_{I,C} V}}
\\[20pt]
\end{array}$$
  $$\begin{array}{l}
\mbox{\rm tr-$\triangleleft$} \
\displaystyle{
  \frac{\begin{array}{l} A\ set \ \ \ \  I(x)\  set \  [x\in A] \ \ \ C(x,j) \in  A \rightarrow \mathsf{prop_s}\ [x\in A, j\in I(x)]\\
        V\in  A \rightarrow \mathsf{prop_s} \qquad\ \
      a\in A\qquad i\in I(a)\\
        r\in \forall_{x\in A}\  (\ x\,\epsilon\, C(a,i)\rightarrow 
      x\triangleleft_{I,C} V\ )  \ \ \end{array} }
  {\displaystyle \mathsf{tr}(a,i,r)\in a\triangleleft_{I,C}V \  }}
\\[20pt]

\mbox{ind-$\triangleleft$} 
\displaystyle{\frac{ 
    \begin{array}{l}    A\ set \ \ \ \  I(x)\  set \  [x\in A] \ \ \
      C(x,j)\  \in A\ \rightarrow \mathsf{prop_s}\ \ [x\in A, j\in I(x)]\\[5pt]
         P(x)\  prop\ [x\in A]\qquad  V\in A\ \rightarrow\  \mathsf{prop_s}\qquad \\[5pt]
      a\in A\qquad   m\in a \cov_{I,C} V\qquad \\[5pt]
      q_1(x,z)\in P(x)\ [ x\in A, z\in x\,\epsilon\, V]\qquad \qquad \\[5pt]
    q_2(y,j, f) \in P(y)\  [ y\in A, j\in I(y),  f\in\forall_{z\in A}\ (\ z\,\epsilon\, C(y,j) \ \rightarrow \ P(z)\ )]    \end{array}  }{\mathsf{ind}(m, q_1,q_2) \in P(a)     }}
  \\[20pt]
  \end{array}$$
$$\begin{array}{l}
\mbox{C$_1$-ind} 
\displaystyle{\frac{ 
    \begin{array}{l}    A\ set \ \ \ \  I(x)\  set \  [x\in A] \ \ \
      C(x,j)\ \in A\ \rightarrow\  \mathsf{prop_s}\ \ [x\in A, j\in I(x)]\\[5pt]
        P(x)\  prop\ [x\in A]\qquad  V\in A\ \rightarrow \mathsf{prop_s}\qquad \\[5pt]
      a\in A\qquad   r \in a \,\epsilon\, V\qquad \\[5pt]
q_1(x,z)\in P(x)\ [ x\in A, z\in x\,\epsilon\, V]\qquad \qquad \\[5pt]
    q_2(y,j, f) \in P(y)\  [ y\in A, j\in I(y),  f\in\forall_{z\in A}\ (\ z\,\epsilon\, C(y,j) \ \rightarrow \ P(z)\ )]    \end{array} 
       }{\mathsf{ind}(\mathsf{rf}(a,r) , q_1,q_2)=q_1(a, r) \in P(a)      }}
\\[20pt]
\mbox{C$_2$-ind} 
\displaystyle{\frac{ 
    \begin{array}{l}    A\ set \ \ \ \  I(x)\  set \  [x\in A] \ \ \
      C(x,j)\ \in A\ \rightarrow\  \mathsf{prop_s}\  \ [x\in A, j\in I(x)]\\[5pt]
         P(x)\  prop\ [x\in A]\qquad  V\in A\ \rightarrow \mathsf{prop_s}\qquad \\[5pt]
    a\in A\qquad i\in I(a)\qquad    r\in\forall_{x\in A}\ (\ x\,\epsilon\, C(a,i)\ \rightarrow\ x\triangleleft_{I,C} V\ )
    \\[5pt] 
    q_1(x,z)\in P(x)\ [ x\in A, z\in x\,\epsilon\, V]\qquad \qquad \\[5pt]
    q_2(y,j, f) \in P(y)\  [ y\in A, j\in I(y),  f\in\forall_{z\in A}\ (\ z\,\epsilon\, C(y,j) \ \rightarrow \ P(z)\ )]    \end{array} }
  {\mathsf{ind}(\mathsf{tr}(a,i, r) , q_1,q_2)=q_2 (\, a\, ,i\, ,\, \lambda z.\lambda w.\, \mathsf{ind}(\, \mathsf{Ap}(\mathsf{Ap}(r,z),w)\, ,q_1,q_2)\, ) \in P(a)      }}
  \end{array}
  $$
    \\

Note that the cover relation preserves extensional equality of subsets represented as small propositional functions thanks to the induction principle:
\begin{lemma}
      For any axiom set in \mttind\  on a set $A$ with
      $ I(x)\  set\  [x\in A]$ and $  C(x,j)\in A\ \rightarrow \mathsf{prop_s} \ \ [x\in A, j\in I(x)]$ and for any propositional functions
      $  V_1\in  A\ \rightarrow \mathsf{prop_s}$ and $  V_2\in  A\ \rightarrow \mathsf{prop_s}$, there exists a proof-term
      $$q\in V_1=_{ext} V_2\ \rightarrow\  a\triangleleft_{I,C} V_1=_{ext}  a
      \triangleleft_{I,C} V_2 $$
      where for any small propositional functions $W_1$ and $W_2$
on a set $A$ we use the following abbreviation
      $$W_1=_{ext}W_2 \, \equiv\, \forall_{x\in A}\ (\ W_1(x)\ \leftrightarrow\ W_2(x)\ )$$
    \end{lemma}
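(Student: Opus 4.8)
The plan is to build the required proof-term $q$ by a double application of the induction principle \mbox{ind-$\triangleleft$}, using as the predicate a suitable biconditional family. Concretely, I would first observe that the statement unfolds to: given $r \in V_1 =_{ext} V_2$, I must exhibit a proof-term in $a\triangleleft_{I,C} V_1 \leftrightarrow a\triangleleft_{I,C}V_2$, i.e.\ a pair of functions converting covers with respect to $V_1$ into covers with respect to $V_2$ and back. By symmetry in $V_1,V_2$ it suffices to produce, for an arbitrary pair $V,W$ with a witness $h \in V=_{ext}W$, a proof-term
\[
k(h)\in \forall_{x\in A}\ (\ x\triangleleft_{I,C} V\ \rightarrow\ x\triangleleft_{I,C} W\ ),
\]
since applying this with $(V,W):=(V_1,V_2)$ and with $(V,W):=(V_2,V_1)$ (using that $=_{ext}$ is symmetric, which follows from symmetry of $\leftrightarrow$) and pairing gives the desired $q$ after a $\lambda$-abstraction on $r$.

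To construct $k(h)$ I would apply \mbox{ind-$\triangleleft$} with the proposition $P(x)\,\equiv\, x\triangleleft_{I,C} W$, which indeed does not depend on proof-terms, so the restriction on the elimination rule is respected. This requires two ingredients: the base case $q_1(x,z)\in x\triangleleft_{I,C}W$ for $x\in A$ and $z\in x\,\epsilon\, V$, and the inductive case $q_2(y,j,f)\in y\triangleleft_{I,C}W$ for $y\in A$, $j\in I(y)$, and $f\in\forall_{z\in A}(z\,\epsilon\, C(y,j)\rightarrow z\triangleleft_{I,C}W)$. For the base case, from $z\in x\,\epsilon\, V$ and the $V\Rightarrow W$ direction of $h$ (unfolded at $x$) I obtain a proof-term in $x\,\epsilon\, W$, and then \mbox{rf-$\triangleleft$} gives $\mathsf{rf}(x,\ldots)\in x\triangleleft_{I,C}W$; so $q_1(x,z)\,\equiv\,\mathsf{rf}(x,\pi_1(\mathsf{Ap}(h,x))(z))$ or the evident variant, depending on the exact primitive form of $\leftrightarrow$ and $\epsilon$. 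For the inductive case, $f$ is precisely a proof-term of $\forall_{z\in A}(z\,\epsilon\, C(y,j)\rightarrow z\triangleleft_{I,C}W)$, which is exactly the premise needed by \mbox{tr-$\triangleleft$} at index $j\in I(y)$, so $q_2(y,j,f)\,\equiv\,\mathsf{tr}(y,j,f)\in y\triangleleft_{I,C}W$. Feeding $q_1,q_2$ into \mbox{ind-$\triangleleft$} together with the hypothesis $m\in x\triangleleft_{I,C}V$ yields $\mathsf{ind}(m,q_1,q_2)\in x\triangleleft_{I,C}W$, and abstracting over $x$ and $m$ produces $k(h)$.

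The only genuine subtlety — and the part I expect to need the most care — is purely bureaucratic: tracking how $=_{ext}$, $\leftrightarrow$, the forall-quantifier $\forall_{x\in A}$, and the abbreviation $a\,\epsilon\, V\equiv \mathsf{Ap}(V,a)$ unfold into the actual primitive proof-term constructors of \mtt\ (application, pairing/projection for $\wedge$, $\lambda$-abstraction for $\rightarrow$ and $\forall$), so that the displayed terms typecheck on the nose rather than merely "morally". There is no conceptual obstacle and no need for the computation rules \mbox{C$_1$-ind}, \mbox{C$_2$-ind}: the argument is a direct structural recursion on the generation of the cover. I would therefore present the construction of $k(h)$ in the two cases as above, note that symmetry of $=_{ext}$ reduces both directions of the biconditional to this single construction, and conclude by exhibiting $q\,\equiv\,\lambda r.\,\langle\, k(r)(a)\, ,\, k(\mathsf{sym}(r))(a)\,\rangle$, where $\mathsf{sym}$ denotes the evident proof-term swapping the two halves of the pointwise biconditional.
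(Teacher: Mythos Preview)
Your proposal is correct and follows exactly the approach the paper has in mind: the paper does not give a detailed proof but merely states the lemma with the remark that the cover preserves extensional equality ``thanks to the induction principle,'' and your construction---applying \mbox{ind-$\triangleleft$} with $P(x)\equiv x\triangleleft_{I,C}W$, handling the base case via \mbox{rf-$\triangleleft$} after transporting along the hypothesis $h$, and the inductive case directly via \mbox{tr-$\triangleleft$}---is precisely the intended elaboration of that remark. Your observation that $P(x)$ does not depend on proof-terms, so that the restricted elimination rule of \mttind\ applies, is the one point worth making explicit, and you have done so.
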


        Recall that the interpretation of \emtt\ in \mtt\ in \cite{m09}
        interprets a set $A$ as an {\it extensional quotient} defined in \mtt\
        as a set $A^J$  of \mtt, called {\it support}, equipped
        with an equivalence relation $=_{A^J}$ over $A^J$, as well as families of
        sets are interpreted as families of extensional sets preserving the equivalence
        relations in their telescopic contexts. Now, 
         lemma~\ref{eqcov} suggests that  we can interpret an inductive basic cover  on a set $A$ of \emttind\  
         within \mttind\  as an inductive cover of \mttind\ on  the support $A^J$ by enriching the interpretation of the axiom-set in \mttind\ with the equivalence
         relation $=^{A^J}$ in a similar way to definition~\ref{indquo} as follows:
         \begin{definition}
          For any axiom set in \mttind\   represented by a set $A$ with
      $ I(x)\  set\  [x\in A]$ and $  C(x,j)\in A\ \rightarrow \mathsf{prop_s} \ \ [x\in A, j\in I(x)]$ 
      and for any given equivalence relation
      $x=_Ay \in   \mathsf{prop_s}\ [x\in A,y\in A]$ turning $A$ into an extensional set 
      as well as  the family of set  $ I(x)\  set\  [x\in A]$ and propositional functions $  C(x,j)\in A\ \rightarrow \mathsf{prop_s} \ \ [x\in A, j\in I(x)]$
      into an extensional family of sets and extensional propositional functions preserving $=_{A}$ according to the definitions in \cite{m09},
         we define a new axiom set as follows
         $$\begin{array}{l}
 A^{=_A}\, \equiv\, A\qquad \qquad I^{=_A}(x)\, \equiv\, I(x) + (\Sigma y\in A)(  x=_A y)\qquad \qquad \mbox{ for } x\in A\\
\end{array}
$$
where $C^{=_A}(a,j)$ is the formalization of
$$C^{=_A}(a,j)\, \equiv\, \begin{cases}
   C(a,j)\, &  \mbox{ if } j\in  I(a)\\
  \{\, \pi_1(j) \, \} &  \mbox{ if } j\in (\Sigma y\in A)( a=_A y)\\
\end{cases}$$
for $a\in A$ and $j\in I^{=_A}(x)$.

\noindent We then call $\cov_{I,C}^{=_A}$ the inductive basic cover generated from this axiom set.
      \end{definition}

  We are now ready to interpret \emttind\ in the quotient model over \mttind\ built as in \cite{m09}:
    \begin{prop}\label{fullcv}
      The interpretation of \emtt\ in \mtt\ in \cite{m09} extends to an interpretation of \emttind\ in \mttind\ by interpreting an inductive basic cover $a \cov_{I,C} V$ for $a\in A$
      and $V\in {\mathcal P}(A)$
      as the inductive basic cover $\cov_{I^J,C^J}^{=_{A^J}}$  in \mttind\  over the support $A^J$ of the interpretation of $A$.
    \end{prop}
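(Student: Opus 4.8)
The plan is to extend the clauses of the interpretation of \emtt\ in \mtt\ given in \cite{m09} by a single new clause, the one for the proposition $a\cov_{I,C}V$, and then to check that the four rules which \emttind\ adds to \emtt\ are validated; since these only concern \emph{truth}, no proof-term occurring at the extensional level, validating them means producing in the model an inhabitant of the small proposition interpreting the conclusion out of inhabitants of those interpreting the premises. Recall that in \cite{m09} a set $A$ is interpreted by a support $A^J$ of \mtt\ carrying an equivalence relation $=_{A^J}$ that is itself a small proposition; a family of sets $I(x)\ set\ [x\in A]$ together with a propositional function $C(x,j)\in\mathcal{P}(A)\ [x\in A,j\in I(x)]$ by $I^J,C^J$ which, together with $=_{A^J}$, form an extensional family of sets and extensional propositional functions preserving $=_{A^J}$; a subset $V\in\mathcal{P}(A)$ by a propositional function $V^J\in A^J\rightarrow\mathsf{prop_s}$ preserving $=_{A^J}$, with $a\,\epsilon\,V$ interpreted by $\mathsf{Ap}(V^J,a^J)$; and a proposition $P(x)\ prop\ [x\in A]$ by a propositional function $P^J$ over $A^J$ together with a proof-term $\mathsf{tp}$ witnessing that $P^J$ preserves $=_{A^J}$. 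Hence $(A^J,=_{A^J},I^J,C^J)$ are exactly the data the construction $\cov_{I,C}^{=_A}$ defined just above requires, and we set
$$[\![\,a\cov_{I,C}V\,]\!]\ \equiv\ [\![a]\!]\ \cov_{I^J,C^J}^{=_{A^J}}\ [\![V]\!],$$
which is the proof-relevant, $=_{A^J}$-indexed analogue of the situation considered in Lemma~\ref{eqcov}.

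First I would check well-definedness. The clause yields a small proposition by rule F-$\triangleleft$ of \mttind, so rule F-$\triangleleft$ of \emttind\ is validated, and it commutes with substitution since each of $[\![A]\!],[\![I]\!],[\![C]\!],[\![V]\!],[\![a]\!]$ does by induction hypothesis while forming $\cov_{I^J,C^J}^{=_{A^J}}$ is a syntactic operation of \mttind. To interpret $a\cov_{I,C}V$ as a genuine proposition over the interpreted context one must moreover exhibit invariance under extensional equality of subsets in the argument $V$ and under $=_{A^J}$ in the argument $a$. The former is the Lemma proved just above on the invariance of the cover under extensional equality of subsets, applied to the modified axiom set. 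For the latter, given $h\colon a=_{A^J}a'$ and a proof-term $m\colon a\cov_{I^J,C^J}^{=_{A^J}}V$, one applies rule tr-$\triangleleft$ of \mttind\ at $a'$ with the index $\mathsf{inr}(\langle a,h'\rangle)\in I^{=_{A^J}}(a')=I^J(a')+(\Sigma y\in A^J)(a'=_{A^J}y)$, where $h'\colon a'=_{A^J}a$ comes from symmetry of $=_{A^J}$: since $C^{=_{A^J}}(a',\mathsf{inr}(\langle a,h'\rangle))$ is the singleton $\{a\}$ formalised via propositional equality, the premise of tr-$\triangleleft$ becomes $\forall_z(\mathsf{Id}(A^J,z,a)\rightarrow z\cov_{I^J,C^J}^{=_{A^J}}V)$, which is supplied by $\mathsf{Id}$-elimination on $m$, the converse direction being symmetric. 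The two generative rules of \emttind\ are then immediate from their \mttind-counterparts: rf-$\triangleleft$ because a proof-term $r$ of $a\,\epsilon\,V$ gives $\mathsf{rf}(a,r)\in a\cov_{I^J,C^J}^{=_{A^J}}V$ by rule rf-$\triangleleft$ of \mttind, and tr-$\triangleleft$ because from $i\in I^J(a)$ and a proof-term $r$ of $\forall_y(y\,\epsilon\,C^J(a,i)\rightarrow y\cov_{I^J,C^J}^{=_{A^J}}V)$, using $C^{=_{A^J}}(a,\mathsf{inl}(i))=C^J(a,i)$, rule tr-$\triangleleft$ of \mttind\ gives $\mathsf{tr}(a,\mathsf{inl}(i),r)\in a\cov_{I^J,C^J}^{=_{A^J}}V$.

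The remaining rule, ind-$\triangleleft$ of \emttind, is the crux. In the model one is given $P^J$ with its transport term $\mathsf{tp}$, a proof-term $m$ of $a\cov_{I^J,C^J}^{=_{A^J}}V$, and --- decomposing the premise $\mathsf{cont}(V,P)\ true$ by $\wedge$-elimination --- proof-terms $p_1$ of $\forall_x(x\,\epsilon\,V\rightarrow P(x))$ and $p_2$ of $\forall_y\,\forall_{j\in I^J(y)}(\,\forall_z(z\,\epsilon\,C^J(y,j)\rightarrow P(z))\rightarrow P(y)\,)$. One obtains $P^J(a)$ by applying rule ind-$\triangleleft$ of \mttind\ with $q_1(x,z)\equiv\mathsf{Ap}(\mathsf{Ap}(p_1,x),z)$ and with $q_2(y,j,f)$ defined by cases on $j\in I^{=_{A^J}}(y)=I^J(y)+(\Sigma z\in A^J)(y=_{A^J}z)$: if $j=\mathsf{inl}(j')$ then $C^{=_{A^J}}(y,\mathsf{inl}(j'))=C^J(y,j')$, so $f$ has type $\forall_z(z\,\epsilon\,C^J(y,j')\rightarrow P(z))$ and one takes $q_2\equiv\mathsf{Ap}(\mathsf{Ap}(\mathsf{Ap}(p_2,y),j'),f)$; if $j=\mathsf{inr}(k)$ then $C^{=_{A^J}}(y,\mathsf{inr}(k))$ is the singleton $\{\pi_1(k)\}$, so applying $f$ to $\pi_1(k)$ and the canonical proof of $\mathsf{Id}(A^J,\pi_1(k),\pi_1(k))$ yields a proof of $P(\pi_1(k))$, which the transport $\mathsf{tp}$ turns, along $\pi_2(k)\colon y=_{A^J}\pi_1(k)$, into a proof of $P(y)$ that one takes for $q_2$. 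Then $\mathsf{ind}(m,q_1,q_2)\in P^J(a)$, as required; no conversion rule has to be checked because \emttind, being proof-irrelevant, has no analogue of C$_1$-ind or C$_2$-ind. I expect the only delicate point to be this last ($\mathsf{inr}$) case, where $P$ must be transported along $=_{A^J}$: this is precisely why the axiom set has to be enlarged as in $\cov_{I,C}^{=_A}$, and with it the whole argument becomes the proof-relevant, $=_{A^J}$-indexed rendering of Lemma~\ref{eqcov}.
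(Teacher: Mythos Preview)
Your proof is correct and follows the same approach as the paper: interpret $a\cov_{I,C}V$ as $\cov_{I^J,C^J}^{=_{A^J}}$ and verify that this is an extensional proposition over the extensional set interpreting $A$ and over the interpretation of $\mathcal{P}(A)$. The paper's own proof is a single sentence pointing to precisely this extensionality check and deferring the remainder to the machinery of \cite{m09}; you instead spell out the verification of each of the four \emttind\ rules, and in particular unfold the $\mathsf{inr}$-case of $q_2$ in ind-$\triangleleft$, where transport of $P$ along $=_{A^J}$ is needed---exactly the point the paper isolates as requiring the passage from the axiom set $(I^J,C^J)$ to its $=_{A^J}$-enriched version.
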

       \begin{proof}
       It follows from the proof given in \cite{m09} after checking that
       the inductive basic cover $\cov_{I^J,C^J}^{=_{A^J}}$  is an extensional proposition over the extensional set interpreting $A$ and over
      the interpretation of $ {\mathcal P}(A)$  in the sense of \cite{m09}.
       \end{proof}

 \section{The fragment \mluif\ of intensional Martin-L{\"o}f's type theory with inductive basic covers}
 We here briefly describe the theory \mluif\  obtained by adding
the rules of inductive basic covers
to the first order fragment of intensional Martin-L{\"o}f's type theory in  \cite{PMTT} with one universe.

 This theory  interprets \mttind\ as soon as propositions
 are identified with sets following the Curry-Howard correspondence
 in \cite{PMTT}.  In accordance with this propositions as sets interpretation, which is a pecularity of   Martin-L{\"o}f's type theory,
contrary to \mttind\  in \mluif\ we strengthen the elimination
 rule of inductive basic covers to act towards sets depending on their proof-terms according to inductive generation of types in Martin-L{\"o}f's type theory.

 As a consequence the interpretation of \mttind\ into \mluif\  also validates   the axiom of choice \ac\ as  formulated in the introduction.

 Therefore in order to show the consistency of \mttind\ with \ac+\ct\  (with
 \ct\ formulated as in the introduction) it is enough
 to show the consistency of \mluif\ extended with (the translation of) \ct.

 Here we adopt the notation of types and terms within the first order fragment \mlu\ of  intensional Martin-L{\"o}f's type theory  with one universe $U_0$  \`a la Tarski in \cite{IMMS} and we
 just describe the rule of inductive basic covers added to it. 

  To this purpose we add to \mlu\  the code
 $$ a\, \widehat{\triangleleft}_{s,i,c}\, v\in U_0\qquad \mbox{ for } a\in \mathsf{T}(s) \mbox{ and }
 v\in \mathsf{T}(s)\ \rightarrow \ U_0$$
 meaning that {\it the element $a$ of a small set  $\mathsf{T}(s)$  represented by the code $s\in U_0$ is covered by the subset
 $v$} represented by a small propositional function from $\mathsf{T}(s)$
 to the (large) set of small propositions identified with $U_0$ by the propositions-as-sets correspondence.

 Moreover,
  we use the abbreviations
$$a\triangleleft_{s,i,c} v\, \equiv\,  \mathsf{T}(a\,\widehat{\triangleleft}_{s,i,c}\, v)  \qquad \qquad x\,\epsilon\, y\, \equiv\, \mathsf{T}(\mathsf{Ap}(y,x))$$ 


 and  the notation
$$\mathsf{axcov}(s,i,c)$$
 to abbreviate the following judgements
$$s\in U_0 \ \ \ \  i(x)\in U_0 \  [x\in \mathsf{T}(s)] \ \ \
 c(x,y)\in \mathsf{T}(s)\rightarrow U_0 \ [x\in \mathsf{T}(s), y\in \mathsf{T}(i(x))]$$
 
  Then, the precise rules of inductive basic covers extending \mlu\ to form a new type system \mluif\ are the following:\\

 {\bf Rules of inductively generated basic covers in \mluif}

  $$\begin{array}{l}
\mbox{\rm F-$\triangleleft$} \
\displaystyle{\frac{\begin{array}{l}
    \mathsf{axcov}(s,i,c)\qquad 
      a\in \mathsf{T}(s)\qquad  v\in \mathsf{T}(s)\rightarrow U_0\ \end{array} }
  {\displaystyle a\,\widehat{\triangleleft}_{s,i,c}\, v\in U_0 }}\\
  \\[20pt]
  \end{array}$$
  $$\begin{array}{l}
\mbox{\rm rf-$\triangleleft$} \
\displaystyle{\frac{\begin{array}{l}
     \mathsf{axcov}(s,i,c)\qquad 
   a\in \mathsf{T}(s)\qquad  v\in \mathsf{T}(s)\rightarrow U_0\qquad\  r\in a\,\epsilon\, v\ \end{array} }
  {\displaystyle\mathsf{rf}(a,r)\in a \triangleleft_{s,i,c} v}}\\
\\[20pt]
\end{array}$$
$$\begin{array}{l}
\mbox{\rm tr-$\triangleleft$} \
\displaystyle{
  \frac{\begin{array}{l} \mathsf{axcov}(s,i,c)\qquad
      a\in \mathsf{T}(s)\qquad j\in \mathsf{T}(i(a))\qquad   v\in \mathsf{T}(s)\rightarrow U_0 \\
      r\in (\Pi z\in \mathsf{T}(s))(z\,\epsilon\,c(a,j)\rightarrow z \triangleleft_{s,i,c}v) \end{array} }
  {\displaystyle \mathsf{tr}(a,j,r) \in a\triangleleft_{s,i,c}v }}\\
\\[10pt]
\end{array}$$
      $$\begin{array}{l}
        \mbox{ind-$\triangleleft$} 
\displaystyle{\frac{ 
    \begin{array}{l}    \mathsf{axcov}(s,i,c)\\        v\in \mathsf{T}(s)\rightarrow U_0 \qquad P(x,u)\  type\ [x\in \mathsf{T}(s), u\in x\triangleleft_{s,i,c}v]\qquad \\
     a \in \mathsf{T}(s) \qquad m\in a\triangleleft_{s,i,c}v\\
          q_1 (x,w)\in P(x,\mathsf{rf}(x,w))\ [x\in \mathsf{T}(s), w\in  x\,\epsilon\, v]\\[3pt]
          q_2(x,h ,k,f)\in P(x,\mathsf{tr}(x,h,k))\ \ \\
          \quad \qquad  [x\in \mathsf{T}(s), h\in \mathsf{T}(i(x)),\\
            \qquad \qquad  k\in (\Pi z\in \mathsf{T}(s))(z\,\epsilon\,c(x,h)\rightarrow z \triangleleft_{s,i,c}v),\\
           \qquad \qquad\qquad f\in
            (\Pi z\in \mathsf{T}(s))(\Pi u\in z\,\epsilon\,c(x,h))\, P(z, \mathsf{Ap}(\mathsf{Ap}(k,z),u))]       
  \end{array} }{\mathsf{ind}(m, q_1,q_2)\in  P(a,m)     }}\\[20pt]
  \end{array}$$
$$  \begin{array}{l}
  \mbox{C$_1$-ind-$\triangleleft$} 
\displaystyle{\frac{ 
    \begin{array}{l}   \mathsf{axcov}(s,i,c)\\  
        v\in \mathsf{T}(s)\rightarrow U_0 \qquad P(x,u)\  type\ [x\in \mathsf{T}(s),u\in x\triangleleft_{s,i,c}v ]\\
     a \in \mathsf{T}(s) \qquad r\in a\,\epsilon\, v\\
          q_1 (x,w)\in P(x,\mathsf{rf}(x,w))\ [x\in \mathsf{T}(s), w\in  x\,\epsilon\, v]\\[5pt]
          q_2(x,h,k,f  )\in P(x,\mathsf{tr}(x,h,k))\ \ \\
        \quad \qquad  [x\in \mathsf{T}(s), h\in \mathsf{T}(i(x)),\\
            \qquad \qquad  k\in (\Pi z\in \mathsf{T}(s))(z\,\epsilon\,c(x,h)\rightarrow x \triangleleft_{s,i,c}v), \\
            \qquad \qquad\qquad f\in
            (\Pi z\in \mathsf{T}(s))(\Pi u\in z\,\epsilon\,c(x,h))\, P(z, \mathsf{Ap}(\mathsf{Ap}(k,z),u))]       
  \end{array} }{\mathsf{ind}(\mathsf{rf}(a,r), q_1,q_2)= q_1(a,r )\in  P(a,\mathsf{rf}(a,r))   }}\\[20pt]
    \end{array}$$
$$  \begin{array}{l}
\mbox{C$_2$-ind-$\triangleleft$} 
\displaystyle{\frac{ 
    \begin{array}{l}  \mathsf{axcov}(s,i,c)\\  
        v\in \mathsf{T}(s)\rightarrow U_0 \qquad P(x,u)\  type\ [x\in \mathsf{T}(s),u\in x\triangleleft_{s,i,c}v ]\\
      a \in \mathsf{T}(s) \qquad j\in \mathsf{T}(i(a))  \qquad
       r\in (\Pi z\in \mathsf{T}(s))(z\,\epsilon\,c(a,j)\rightarrow z \triangleleft_{s,i,c}v)\\
          q_1 (x,w)\in P(x,\mathsf{rf}(x,w))\ [x\in \mathsf{T}(s), w\in  x\,\epsilon\, v]\\[5pt]
          q_2(x,h ,k,f  )\in P(x,\mathsf{tr}(x,h,k))\ \ \\
        \quad \qquad  [x\in \mathsf{T}(s), h\in \mathsf{T}(i(x)),\\
            \qquad \qquad  k\in (\Pi z\in \mathsf{T}(s))(z\,\epsilon\,c(x,h)\rightarrow z \triangleleft_{s,i,c}v),\\
            \qquad \qquad\qquad f\in
            (\Pi z\in \mathsf{T}(s))(\Pi u\in z\,\epsilon\,c(x,h))P(z, \mathsf{Ap}(\mathsf{Ap}(k,z),u))]         \end{array} }{\mathsf{ind}(\mathsf{tr}(a,j,r), q_1,q_2)= q_2(a,j,r,\lambda z.\lambda u.\mathsf{ind}(\mathsf{Ap}(\mathsf{Ap}(r,z),u),q_1,q_2))\in  P(a,\mathsf{tr}(a,j,r))     }}\\[20pt]
\end{array}$$

A crucial difference from the ordinary versions of Martin-L\"of's type theory
is that for $\mluif$ we {\it postulate just the replacement rule repl)

\noindent
$$
\begin{array}{l}
      \mbox{repl)} \ \
\displaystyle{ \frac
         { \displaystyle 
\begin{array}{l}
 c(x_1,\dots, x_n)\in C(x_1,\dots,x_n)\ \
 [\, x_1\in A_1,\,  \dots,\,  x_n\in A_n(x_1,\dots,x_{n-1})\, ]   \\[2pt]
a_1=b_1\in A_1\ \dots \ a_n=b_n\in A_n(a_1,\dots,a_{n-1})
\end{array}}
         {\displaystyle c(a_1,\dots,a_n)=c(b_1,\dots, b_n)\in
 C(a_1,\dots,a_{n})  }}
\end{array}     
$$
  in place of the usual congruence rules which would include the $\xi$-rule} in accordance
with the rules of \mtt\ in \cite{m09}, and hence of \mttind.

The motivation for this restriction in \mttind\ and in \mluif\  is due to the fact that
the realizability semantics we present in the next sections, based on that
in \cite{IMMS} and hence
on the original Kleene realizability in \cite{DT88}, does not validate 
the $\xi$-rule\footnote{Notice that a trivial instance of the $\xi$-rule is derivable from repl) when $c$ and $c'$ don't depend on $x^B$.} of lambda-terms
$$
\mbox{  $\xi$} \
\displaystyle{\frac{ \displaystyle c=c'\in  C\ [x\in B]  }
{ \displaystyle \lambda x^{B}.c=\lambda x^{B}.c' \in (\Pi x\in B) C}}
$$
which is instead valid in \cite{PMTT}.

It is indeed an open problem whether
the original intensional version of  Martin-L{\"o}f's type theory  in \cite{PMTT}, including the
$\xi$-rule of lambda terms, is consistent with \ct.

It is worth noting that  the lack of the $\xi$-rule  does not affect the possibility of adopting \mtt\ as
the intensional
level of a  two-level constructive foundation
as intended in  \cite{mtt}, since 
  its term equality rules suffice to interpret
an extensional level including extensionality of functions, as that represented
by \emtt, by means of the quotient model as introduced in \cite{m09} and studied abstractly
in ~\cite{elqu,qu12,uxc}.

Furthermore our realizability semantics interprets terms
as applicative terms in the first Kleene algebra and their
equality as numerical
equality turning into an extensional equality in the context-dependent case.
Hence we need a suitable
encoding of lambda-terms which validates the replacement rule under the interpretation.  As observed in \cite{IMMS} not each translation of pure lambda calculus
in the first Kleene algebra satisfies this requirement (see pp.881-882 in \cite{IMMS}).

Now note that we can interpret \mttind\ within \mluif\ by first interpreting \mttind\ in the version of \mluif\ with the universe $U_0$ \`a la Russell
which is then interpreted into the original version \mluif\ with the first universe \`a la Tarski. 

 In more detail, we call \mlur\ the first order fragment of intensional Martin-L{\"o}f's type theory in  \cite{PMTT} with the first universe $U_0$ formulated \`a la Russell.  Then, we call \mluifr\ 
 the version of \mluif\ with the universe $U_0$ and the rules of inductively generated basic covers \`a la Russell which are obtained from those of \mluif\ 
  by identifying the code of  a set in the first universe $U_0$
with the set itself.
In particular in \mluifr\ we must have a new set in $U_0$ expressing  the basic cover 
$$ a\, \triangleleft_{I,C} V \qquad \mbox{ for  a set $A$ in $U_0$ } \mbox{ and }
 V\in A\ \rightarrow \ U_0$$
 for any  axiom set given by a set family $I(a)$   in  $U_0$ for  $a\in A$  and $C(a,j)\in  A\ \rightarrow \ U_0 $ for $a\in A$ and $j\in I(a)$.

\begin{theorem}\label{first}
  The interpretation of \mtt\ into \mlur\  given in \cite{m09}
  extends to that of \mttind\   by interpreting 
  each basic cover $\cov_{I,C}$ of \mttind\ associated to an axiom set $I(-)$ and $C(-,-)$  in the corresponding
  basic cover of \mluifr\ associated to the interpreted axiom set.
  \end{theorem}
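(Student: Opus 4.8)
The plan is to argue by induction on derivations, extending the interpretation $(\,\cdot\,)^{*}$ of \mtt\ into \mlur\ from \cite{m09}. I would set
$$(a\triangleleft_{I,C}V)^{*}\ \equiv\ a^{*}\triangleleft_{I^{*},C^{*}}V^{*},$$
the inductive basic cover of \mluifr\ associated to the interpreted axiom set, and interpret the proof-term constructors $\mathsf{rf}$, $\mathsf{tr}$, $\mathsf{ind}$ of \mttind\ by the homonymous constructors of \mluifr. The first thing to record is that the interpretation of \cite{m09} already sends a set to a set of $U_0$, a family of sets $I(x)\ set\ [x\in A]$ to $I^{*}(x)\in U_0\ [x\in A^{*}]$, and small propositional functions $C(x,j)\in A\rightarrow\mathsf{prop_s}$ and subsets $V\in A\rightarrow\mathsf{prop_s}$ to functions into $U_0$ (small propositions being read as small sets), and that it commutes with the membership abbreviation $\epsilon$ and with $\forall$ and $\rightarrow$, which on the \mlur\ side are $\mathsf{Ap}$, $\Pi$ and $\Pi$. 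Granting this, the interpreted data of any axiom set of \mttind\ satisfy the hypotheses $\mathsf{axcov}(\cdot,\cdot,\cdot)$ of \mluifr, so the rules F-$\triangleleft$, rf-$\triangleleft$ and tr-$\triangleleft$ of \mttind\ are immediately validated by the corresponding rules of \mluifr: their premises and conclusions become definitional instances of the latter, and in particular the small proposition $a\triangleleft_{I,C}V$ is interpreted as a set of $U_0$, in accordance with F-$\triangleleft$ of \mluifr.

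The delicate rule is ind-$\triangleleft$, since in \mttind\ it eliminates only towards propositions $P(x)\ prop\ [x\in A]$ that do not depend on the proof-terms of the cover, whereas \mluifr\ eliminates towards a proof-term-dependent family $P(x,u)$. This is the easy direction, and the device I would use is to interpret the \mttind\ motive $P(x)$ by the \mlur\ family
$$\widehat P(x,u)\ \equiv\ P^{*}(x)\qquad[\,x\in A^{*},\ u\in x\triangleleft_{I^{*},C^{*}}V^{*}\,],$$
constant in $u$, and to let the \mluifr\ minor premise $q_2(x,h,k,f)$ be $q_2^{*}(x,h,f)$, discarding the extra argument $k$ that \mluifr\ supplies. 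Then the interpreted minor premises have exactly the types demanded by \mluifr\ and its ind-$\triangleleft$ returns $\mathsf{ind}(m^{*},q_1^{*},q_2^{*})\in\widehat P(a^{*},m^{*})=P^{*}(a^{*})$, as required. The computation rules C$_1$-ind and C$_2$-ind of \mttind\ would follow in the same way from C$_1$-ind-$\triangleleft$ and C$_2$-ind-$\triangleleft$ of \mluifr; the point to check carefully is C$_2$-ind, where the right-hand side of the \mluifr\ equation carries an extra argument $r$ inside $q_2$ but, since $q_2^{*}$ ignores that argument, collapses precisely to the interpretation of $q_2(a,i,\lambda z.\lambda w.\mathsf{ind}(\mathsf{Ap}(\mathsf{Ap}(r,z),w),q_1,q_2))$, matching the \mttind\ rule on the nose.

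Finally I would check that the equality/congruence rules attached to the new constructors --- preservation of $a\triangleleft_{I,C}V$ under equal axiom sets and equal subsets, and of $\mathsf{rf}$, $\mathsf{tr}$, $\mathsf{ind}$ under equal arguments --- are taken care of by the replacement rule repl) postulated in \mluif, hence in \mluifr, exactly as for every other constructor of the \cite{m09} interpretation; no appeal to the $\xi$-rule is needed, consistently with its absence from both \mttind\ and \mluifr. Assembling these verifications completes the induction. I do not expect a genuine obstacle here: the real content is the bookkeeping described above, in particular rendering the proof-irrelevant elimination of \mttind\ faithfully via the proof-term-constant family $\widehat P$ and getting the C$_2$-ind computation rule to come out exactly despite the extra argument present in the \mluifr\ rule.
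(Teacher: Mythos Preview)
Your proposal is correct and follows the same approach as the paper, only with considerably more detail: the paper's proof is a one-line remark that small propositions and axiom sets land in $U_0$ under the interpretation of \cite{m09}, leaving the rule-by-rule verification implicit. Your explicit treatment of ind-$\triangleleft$ via the proof-term-constant motive $\widehat P(x,u)\equiv P^{*}(x)$ and the discarding of the extra argument $k$ in $q_2$ is exactly the intended mechanism, and your check of C$_2$-ind is the only place where anything nontrivial happens; the paper simply does not spell this out.
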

\begin{proof}
  Note that  small propositions  are encoded in the universe $U_{0}$
    as well as axiom sets generating a basic cover  in \mttind.
\end{proof}

Observe that the version \`a la Russell \mluifr\ can be interpreted in that \`a la Tarski \mluif\  by preserving the meaning of sets and of their elements:
\begin{prop}\label{second}
   \mluifr\ can be interpreted  into \mluif\  in such a way that any set $A$ (under a context)  in the first universe is interpreted as a set $\mathsf{T}(c)$ for some code $c$  and
  each basic cover $a\cov_{I,C}V$ of \mluifr\ for $a\in A$ and $V\in A\rightarrow U_0$ associated to an axiom set $I(-)$ and $C(-,-)$  is interpreted as the set with code
 $  a\, \widehat{\triangleleft}_{s,i,c}\, v\in U_0  \mbox{ for } a\in \mathsf{T}(s) \mbox{ and } v\in \mathsf{T}(s)\ \rightarrow \ U_0$ where $s$ is the code of the set interpreting $A$, $v$ is the interpretation of $V$ and $i(x)\in U_0 \  [x\in \mathsf{T}(s)] $ and
 $c(x,y)\in \mathsf{T}(s)\rightarrow U_0 \ [x\in \mathsf{T}(s), y\in \mathsf{T}(i(x))]$
 are the interpretation of the axiom-set.
  \end{prop}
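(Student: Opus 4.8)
\emph{Proof strategy.} The plan is to define, by recursion on derivations, a translation $(-)^{\circ}$ of \mluifr\ into \mluif, following the standard technique for passing from a universe \`a la Russell to one \`a la Tarski (as for the universe of~\cite{PMTT}). To each derivable judgement $A\ type\ [\Gamma]$ one assigns a \mluif-type $A^{\circ}\ type\ [\Gamma^{\circ}]$, and in addition, whenever $A$ is also derivable as an element $A\in U_0\ [\Gamma]$, a code $\lceil A\rceil\in U_0\ [\Gamma^{\circ}]$ subject to the coherence equation $A^{\circ}=\mathsf{T}(\lceil A\rceil)$. The universe itself is translated by $U_0^{\circ}\equiv U_0$; an element $a\in U_0$ of \mluifr\ is translated as the code $\lceil a\rceil$; $\mathsf{T}$ has no Russell-side preimage; on contexts, and on terms of small type, the translation is essentially the identity up to insertion of $\mathsf{T}(-)$, and on proof-terms it is purely structural.

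\emph{Key steps.} First I would fix the action on type/code formers: the finite sets, the natural numbers and the list types translate to themselves, with codes the corresponding constants of $U_0$; $\Pi$, $\Sigma$, $+$ and $\mathsf{Id}$ translate compatibly with the code constructors $\widehat{\Pi},\widehat{\Sigma},\widehat{+},\widehat{\mathsf{Id}}$ of \mluif; and the new small set $a\triangleleft_{I,C}V$ of \mluifr, built from an axiom set $(A,I,C)$ whose components $A$, $I(x)$, $C(x,j)$ all live in $U_0$, is translated to $\mathsf{T}(a^{\circ}\,\widehat{\triangleleft}_{s,i,c}\,v^{\circ})$ with $s\equiv\lceil A\rceil$, $i(x)\equiv\lceil I(x)\rceil$, $c(x,y)\equiv\lceil C(x,y)\rceil$ and $v^{\circ}$ the translation of $V$; its code is $a^{\circ}\,\widehat{\triangleleft}_{s,i,c}\,v^{\circ}$ itself, which is exactly the identification demanded by the last sentence of the statement. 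The proof-term constructors $\mathsf{rf}$, $\mathsf{tr}$, $\mathsf{ind}$ and their typing are transported verbatim, using that F-$\triangleleft$, rf-$\triangleleft$, tr-$\triangleleft$, ind-$\triangleleft$ of \mluif\ are precisely the Tarski counterparts of the corresponding rules of \mluifr. Then I would prove two lemmas: a \emph{substitution lemma}, $(A[a/x])^{\circ}=A^{\circ}[a^{\circ}/x]$, $\lceil A[a/x]\rceil=\lceil A\rceil[a^{\circ}/x]$, and likewise for terms, by a routine simultaneous induction; and a \emph{coherence lemma}, stating that $A^{\circ}$ and $\lceil A\rceil$ depend only on $A$ and $\Gamma$ (not on the chosen derivation) and that $A^{\circ}=\mathsf{T}(\lceil A\rceil)$ holds definitionally in \mluif\ whenever $A\in U_0$ is derivable. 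With these in hand, each conversion rule and each computation rule — in particular C$_1$-ind-$\triangleleft$ and C$_2$-ind-$\triangleleft$, whose right-hand sides involve substitutions — goes over to its \mluif-analogue; and since repl) is present on both sides and manifestly respected by a structural translation, the absence of the $\xi$-rule causes no difficulty.

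\emph{Main obstacle.} I expect the coherence lemma to be the only genuinely delicate point: because in \mluifr\ a small set can be used both as a type and as an element of $U_0$ with no explicit coercion, the recursion must be run on derivations with enough bookkeeping to track which types are ``small'', and one must check that for every small type former — including the new $\widehat{\triangleleft}$ — the Russell-style collapse of a code with its set is matched on the \mluif-side by the definitional equality $\mathsf{T}(\lceil A\rceil)=A^{\circ}$ delivered by the $\mathsf{T}$-computation rules. Everything else is bookkeeping of a kind already carried out, in the cover-free case, in the interpretation of \mtt\ into \mlur\ used in~\cite{m09} and invoked in Theorem~\ref{first}.
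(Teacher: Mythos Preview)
Your approach is correct and would yield the desired interpretation, but it differs from the paper's in one methodological respect that is worth noting. You set up the translation by recursion on \emph{derivations}, carrying along both a type $A^{\circ}$ and, when applicable, a code $\lceil A\rceil$, and then you must discharge a coherence lemma ensuring the result is independent of the chosen derivation and that $A^{\circ}=\mathsf{T}(\lceil A\rceil)$ holds definitionally. The paper instead works directly with \emph{raw syntax}: it first introduces pre-terms and pre-types for \mluif, observes that in the Russell presentation a small set may occur both as a pre-term (an element of $U_0$) and as a pre-type, and then defines a \emph{partial} interpretation of pre-terms and pre-types as pre-terms and pre-types of \mluif, sending a small set used as a pre-term to its Tarski code. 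Because the translation is defined on syntax rather than on derivations, the coherence issue you flag as ``the only genuinely delicate point'' simply does not arise: there is nothing to prove about derivation-independence, and the equation $A^{\circ}=\mathsf{T}(\lceil A\rceil)$ is built into the two syntactic clauses rather than being a lemma. What your approach buys is a cleaner inductive invariant if one wants to verify preservation of each rule explicitly; what the paper's approach buys is brevity and the elimination of the coherence obligation altogether.
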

\begin{proof}
The raw syntax of \mluif, i.e.~the preterms and pretypes associated to the syntax of terms
and types of \mluif, is defined in the usual way (see \cite{IMMS}). Observe that a set in the first universe \`a la Russell may appear
both as a preterm and as a pretype.

Then we define a partial interpretation of preterms and pretypes respectively as preterms and pretypes of \mluif\ with the warning of interpreting a set in the first universe $U_0$ used as a preterm
as its corresponding code \`a la Tarski in \mluif.
\end{proof}

\begin{cor}\label{intttmtt}
   \mttind\  can be interpreted in \mluif\ by composing the interpretations in  theorem~\ref{first} and proposition~\ref{second}.  \end{cor}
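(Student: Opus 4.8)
The plan is to check that the composition of two interpretations is again an interpretation, so that derivability in \mttind\ is transported along both maps all the way down to \mluif. First I would recall what an interpretation amounts to here: an assignment sending each context, each type in context, and each term in context of the source theory to a context, a type in context, and a term in context of the target theory, in such a way that every derivable judgment (type formation, typing, definitional equality of types and of terms, and context equality) is mapped to a derivable judgment of the target, and substitution in the source is matched by substitution in the target up to the target's definitional equality. Theorem~\ref{first} provides such an assignment $I_1$ from \mttind\ to \mluifr, and Proposition~\ref{second} provides such an assignment $I_2$ from \mluifr\ to \mluif.

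Then I would observe that the class of such assignments is closed under composition. For the composite $I_2\circ I_1$: a \mttind-context is sent by $I_1$ to a \mluifr-context, then by $I_2$ to a \mluif-context, and likewise for types and terms in context; if $J$ is a derivable \mttind-judgment, then $I_1(J)$ is a derivable \mluifr-judgment by Theorem~\ref{first}, and $I_2(I_1(J))$ is a derivable \mluif-judgment by Proposition~\ref{second}. Compatibility with substitution is handled in two steps in the same manner: $I_1$ commutes with substitution up to \mluifr-definitional equality, while $I_2$ preserves \mluifr-definitional equality and commutes with substitution up to \mluif-definitional equality, so the composite commutes with substitution up to \mluif-definitional equality. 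Under this composite a basic cover $a\cov_{I,C}V$ of \mttind\ is first sent by $I_1$ to the basic cover of \mluifr\ on the interpreted axiom set, and then by $I_2$ to the set with code $a\,\widehat{\triangleleft}_{s,i,c}\,v$ of \mluif, the proof-term constructors $\mathsf{rf}$, $\mathsf{tr}$, $\mathsf{ind}$ and the computation rules $\mathrm{C}_1$-ind and $\mathrm{C}_2$-ind being transported accordingly at each stage.

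The only point requiring mild care is that the target of Theorem~\ref{first} is precisely \mluifr, the \`a la Russell reformulation of \mluif, which is exactly the source of Proposition~\ref{second}, so the two interpretations are composable with no mismatch. Since Proposition~\ref{second} interprets a first-universe set occurring as a preterm by its Tarski code while leaving it in place when it occurs as a pretype, in composing one must keep track of whether an expression produced by $I_1$ sits in term position or in type position; but this is exactly the bookkeeping already carried out in the pre-term/pre-type analysis underlying Proposition~\ref{second}, so no genuine obstacle arises. In short, the corollary is a routine transitivity statement whose whole content lies in the two results it composes.
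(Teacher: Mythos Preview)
Your proposal is correct and follows exactly the approach indicated by the paper, which in fact offers no proof beyond the statement itself: the corollary is treated as immediate from composing the interpretations of Theorem~\ref{first} and Proposition~\ref{second}. You have simply spelled out the routine verification (closure of interpretations under composition, substitution compatibility, and the Russell/Tarski bookkeeping) that the paper leaves implicit.
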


\section{A realizability interpretation of \mluif\ with  the formal Church's Thesis}
Here we are going to describe 
a realizability model of \mluif\ with \ct\
extending that of \mlu\ in \cite{IMMS}.

A main novelty here is that we formalize such a model
in the (generalized) predicative and constructive theory $\mathbf{CZF+REA}$ where
$\mathbf{CZF}$ stands for Constructive Zermelo-Fraenkel Set Theory and $\mathbf{REA}$ stands for the regular extension axiom (for details see \cite{Aczel3,czf,czf2}).

Since the interpretation in \cite{IMMS} is performed in 
$\tar$ which  is a classical theory of fixed points,
we cannot follow the proof technique in \cite{IMMS} to fulfill our
purpose. Moreover $\tar$ is a too weak theory to accommodate inductively defined topologies as it can be gleaned from \cite{CR}. The solution is to adopt the proof-technique in \cite{R93, RG94} to fulfill our goal.

 As usual in set theory we identify the natural numbers with the finite ordinals, i.e.\ $\mathbb{N}:=\omega$. To simplify the treatment we will assume that $\mathbf{CZF}$ has names for all (meta) natural numbers. Let $\overline{n}$ be the constant designating the $n^{th}$ natural number. We also assume that $\mathbf{CZF}$ has function symbols for addition and multiplication on $\mathbb{N}$ as well as for a primitive recursive bijective pairing function $\pp:\mathbb{N}\times \mathbb{N}\rightarrow \mathbb{N}$ and its primitive recursive inverses $\pp_0$ and $\pp_1$, that satisfy $\pp_0(\pp(n,m))=n$ and $\pp_1(\pp(n,m))=m$. We also assume that $\mathbf{CZF}$ is endowed with symbols for a primitive recursive length function $\ell:\mathbb{N}\rightarrow \mathbb{N}$ and a primitive recursive component function $(-)_{-}:\mathbb{N}\times \mathbb{N}\rightarrow \mathbb{N}$ determining a bijective encoding of finite lists of natural numbers by means of natural numbers. $\mathbf{CZF}$ should also have a symbol $T$ for Kleene's $T$-predicate and the result extracting function $U$. Let $P(\{e\}(n))$ be a shorthand for $\exists m(T(e,n,m)\wedge P(U(m)))$. Further, let $\pp(n,m,k):=\pp(\pp(n,m),k)$, $\pp(n,m,k,h):=\pp(\pp(n,m,k),h)$, etc. 
 A similar convention will be adopted for application of partial recursive functions: Let $\{e\}(a,b):=\{\{e\}(a)\}(b)$,  $\{e\}(a,b,c):=\{\{e\}(a,b)\}(c)$ etc.
 We use $a,b,c,d,e,f,n,m,l,k,q,r,s,v,j,i$ as metavariables for natural numbers.  

We first need to introduce some abbreviations: 
\begin{enumerate}
\item $\mathsf{n}_0$ is $\pp(0,0)$, $\mathsf{n}_1$ is $\pp(0,1)$ and $\mathsf{n}$ is $\pp(0,2)$.
\item $\sigmat(a,b)$ is $\pp(1,\pp(a,b))$, $\pit(a,b)$ is $\pp(2,\pp(a,b))$ and $+(a,b)$ is $\pp(3,\pp(a,b))$.
\item $\mathsf{list}(a)$ is $\pp(4,a)$ and $\mathsf{id}(a,b,c)$ is $\pp(5,\pp(a,b,c))$.
\item $a\widetilde{\triangleleft}_{c,d,e}b$ is $\pp(6,\pp(a,b,c,d,e))$.
\item $\rft(a,r)$ is  $\pp(7,\pp(a,r))$.
\item $\trt(a,j,r)$ is $\pp(8,\pp(a,j,r))$. 
\end{enumerate}

Recall that in intuitionistic set theories ordinals are defined as transitive sets all of whose members are transitive sets, too. Unlike in the classical case, one cannot prove that they are linearly ordered but they are perfectly good as a scale along which one can iterate various processes. The trichotomy of $0$, successor, and limit ordinal, of course, has to be jettisoned. We use lowercase greek letters as metavariables for ordinals.

\begin{definition}\label{fixdef} By transfinite recursion on ordinals (cf. \cite{czf2}, Proposition 9.4.4) we define simultaneously two relations $\mathsf{Set}_\alpha(n)$ and $n\,\varepsilon _\alpha\,m$ on $\mathbb{N}$ in $\mathbf{CZF+REA}$.

In the following definition we use the shorthand $\mathsf{Fam}_\alpha(e,k)$ to convey that $\mathsf{Set}_\alpha(k)$ and $\forall j(j\,\varepsilon _\alpha\,k\rightarrow \mathsf{Set}_\alpha(\{e\}(j)))$ and we shall write $\mathsf{Set}_{\in \alpha}(n)$ for $\exists \beta\in \alpha(\mathsf{Set}_\beta (n))$, $n\,\varepsilon _{\in\alpha}\,m$ for $\exists \beta\in \alpha(n\,\varepsilon _\beta\,m)$ and $\mathsf{Fam}_{\in\alpha}(e,k)$ for $\exists \beta\in \alpha(\mathsf{Fam}_\beta(e,k))$.
\begin{enumerate}[itemsep = 0.5em]
\item $\mathsf{Set}_\alpha(\mathsf{n}_j)$ if $j=0$ or $j=1$, and $m\,\varepsilon _\alpha\,\mathsf{n}_j$ if $m<j$;
\item $\mathsf{Set}_\alpha(\mathsf{n})$ holds, and $m\,\varepsilon _\alpha\,\mathsf{n}$ if $m\in \mathbb{N}$.
\item If $\mathsf{Fam}_{\in \alpha}(e,k)$, then $\mathsf{Set}_\alpha(\pit(k,e))$ and $\mathsf{Set}_\alpha(\sigmat(k,e))$;
\\ if $\mathsf{Fam}_{\in \alpha}(e,k)$, then
\begin{enumerate}
\item $n\,\varepsilon _\alpha\,\pit(k,e)$ if there exists $\beta\in \alpha$ such that $\mathsf{Fam}_{\beta}(e,k)$ and 
$(\forall i\,\varepsilon _\beta\,k)\, \{n\}(i)\,\varepsilon_\beta\, \{e\}(i)$.\footnote{We use the obvious shorthand  $(\forall i\,\varepsilon _\beta\,k) \ldots$ for 
$\forall i[i\,\varepsilon_\beta\,k\rightarrow\ldots]$; also employed henceforth.} 
\item $n\,\varepsilon_\alpha\,\sigmat(k,e)$ if there exists $\beta\in \alpha$ such that $\mathsf{Fam}_{\beta}(e,k)$ as well as $\pp_0(n)\,\varepsilon_\beta\,k$ and $\pp_1(n)\,\varepsilon_\beta\, \{e\}(\pp_0(n))$.
\end{enumerate}

\item If there exists $\beta\in\alpha$ such that $\mathsf{Set}_\beta(n)$ and $\mathsf{Set}_\beta(m)$, then $\mathsf{Set}_\alpha(+(n,m))$, and 
\\ $i\,\varepsilon _\alpha {+}(n,m)$ if there exists $\beta\in\alpha$ such that $\mathsf{Set}_\beta(n)$, $\mathsf{Set}_\beta(m)$ and 
$$[\pp_0(i)=0\wedge \pp_1(i)\,\varepsilon_\beta\,n]\,\vee\,[\pp_0(i)=1\wedge \pp_1(i)\,\varepsilon_\beta\,m].$$

\item  If there exists $\beta\in\alpha$ such that $\mathsf{Set}_\beta(n)$, then $\mathsf{Set}_\alpha(\mathsf{list}(n))$, and 
\\ $i\,\varepsilon_\alpha\, \mathsf{list}(n)$ if there exists $\beta\in\alpha$ such that $\mathsf{Set}_\beta(n)$ and $\forall j\,[j<\ell(i)\rightarrow (i)_j \,\varepsilon_\beta\,n]$.

\item If $\mathsf{Set}_{\in \alpha}(n)$, then $\mathsf{Set}_\alpha(\mathsf{id}(n,m,k))$, and 
\\ $s\,\varepsilon_\alpha\,\mathsf{id}(n,m,k)$ if there exists $\beta\in \alpha$ such that $\mathsf{Set}_{\beta}(n)$, $m\,\varepsilon_\beta\,n$ and $s=m=k$.

\item Let $\beta\in \alpha$. Suppose that the following conditions (collectively called $*_\beta$) are satisfied: 
\begin{enumerate}
\item $\mathsf{Set}_\beta(s)$, 
\item $a\,\varepsilon _\beta\,s$, 
\item $\mathsf{Fam}_\beta(v,s)$,  
\item $\mathsf{Fam}_\beta(i,s)$ and
\item $\forall x\forall y[x\,\varepsilon _\beta s\,\wedge\, y\,\varepsilon _\beta \{i\}(x)\rightarrow \mathsf{Fam}_\beta( \{c\}(x,y),s)]$. 
\end{enumerate}
 Then $\mathsf{Set}_\alpha(a\widetilde{\triangleleft}_{s,i,c} v)$.
\\ For $\beta\in\alpha$ satisfying $*_\beta$, let $V_{\beta}$ be the smallest subset of $\mathbb{N}$ satisfying the following conditions:
\begin{enumerate}
\item if $z\, \varepsilon _\beta s$ and $r \,\varepsilon _\beta  \{v\}(z)$ then $\pp(z,\rft(z,r))\in V_{\beta}$;
\item if $r\in\mathbb{N}$, $z\, \varepsilon_\beta\, s $,  $j \,\varepsilon_\beta  \{i\}(z)$ 
and $$(\forall u\, \varepsilon_\beta s)\,(\forall t\,\varepsilon_\beta \{c\}(z,j,u))\ \pp(u,\{r\}(u,t))\in V_\beta$$
then $\pp(z,\trt(z,j,r))\in V_\beta$. 
\end{enumerate}
 The existence of the set $V_\beta$ is guaranteed by the axiom $\mathbf{REA}$. 
 \\ Finally we define $q\,\varepsilon _\alpha\,a\widetilde{\triangleleft}_{s,i,c} v$ by $\exists \beta\in \alpha\,[\,*_\beta \,\wedge\, \pp(a,q)\in V_{\beta}]$.

\end{enumerate}

\end{definition}

 \begin{remark}
   It is perhaps worth noting that in the above definition the interpretation of the Propositional Identity $T(\, \widehat{\mathsf{Id}}(s, a,b)\, )$ for $s\in U_0$ and $a\in \mathsf{T}(s)$ and
   $b\in \mathsf{T}(s)$ agrees with that in \cite{IMMS} which validates
   the rules of the {\it extensional Propositional Identity} in \cite{PMTT}. Hence,  also our realizability semantics actually validates the extensional version of
   \mluif.  Therefore the elimination rule of inductive basic covers
   can be equivalently weakened to act towards
   types which do not depend on  proof-terms of basic covers, as soon as we add a suitable $\eta$-rule in a way similar to what happens to the rules of first-order
   types (like disjoint sums or natural numbers or list types) in the extensional type theories in \cite{tumscs}. 
   \end{remark}

Here we have a crucial lemma.

\begin{lem}[$\mathbf{CZF}+\mathbf{REA}$]\label{fix} \leavevmode
\begin{itemize} 
\item For all $m\in \mathbb{N}$, if $\mathsf{Set}_\alpha(m)$ and $\alpha\subseteq \rho$, then $\mathsf{Set}_\rho(m)$.
  \item
For all $m\in \mathbb{N}$, if $\mathsf{Set}_\alpha(m)$, then for all $\rho$ such that $\mathsf{Set}_\rho(m)$, 
\[\forall i\in \mathbb{N}(i\,\varepsilon _\alpha\,m\leftrightarrow i\,\varepsilon _\rho\,m).\tag{$*$}\]
\end{itemize}

\end{lem}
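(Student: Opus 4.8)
The plan is to prove both claims by simultaneous transfinite induction on $\alpha$, following the pattern of the analogous "persistence/uniqueness" lemmas for the set-theoretic realizability interpretations of Martin-L\"of type theory in \cite{R93,RG94}. The two statements are entangled: monotonicity of $\mathsf{Set}$ in the ordinal parameter is needed to even state the membership-invariance $(*)$, while $(*)$ (applied at smaller ordinals) is needed to push the inductive clauses of Definition~\ref{fixdef} through. So I would phrase the induction hypothesis as: for every $\beta\in\alpha$, both bullets hold with $\beta$ in place of $\alpha$.

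\emph{First bullet (monotonicity of $\mathsf{Set}$).} Suppose $\mathsf{Set}_\alpha(m)$ and $\alpha\subseteq\rho$. One inspects Definition~\ref{fixdef} clause by clause. The base cases ($\mathsf{n}_0,\mathsf{n}_1,\mathsf{n}$) are immediate since they impose no condition on $\alpha$. For every other clause the defining condition has the form ``there exists $\beta\in\alpha$ such that \ldots'' (for the $\Pi,\Sigma,+,\mathsf{list},\mathsf{id}$ clauses via $\mathsf{Set}_{\in\alpha}$ or $\mathsf{Fam}_{\in\alpha}$, and for the $\widetilde\triangleleft$ clause via the predicate $*_\beta$ with $\beta\in\alpha$). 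Since $\alpha\subseteq\rho$, any such witnessing $\beta$ also lies in $\rho$, and the body of the condition refers only to $\mathsf{Set}_\beta$, $\varepsilon_\beta$, $\mathsf{Fam}_\beta$ and the set $V_\beta$ — all of which are \emph{fixed data determined by $\beta$ alone}, not by $\alpha$ — so the very same witness establishes $\mathsf{Set}_\rho(m)$.

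\emph{Second bullet (invariance of membership).} Assume $\mathsf{Set}_\alpha(m)$ and $\mathsf{Set}_\rho(m)$; we must show $i\,\varepsilon_\alpha\,m\leftrightarrow i\,\varepsilon_\rho\,m$ for all $i$. Again proceed by cases on the structure of $m$ as dictated by Definition~\ref{fixdef}. The key observation is that in each clause the membership relation $i\,\varepsilon_\alpha\,m$ is defined by ``there exists $\beta\in\alpha$ such that \ldots'' where the body is again expressed purely in terms of the $\beta$-indexed relations; by the induction hypothesis applied at $\beta$ (and at the various subordinate ordinals appearing inside, e.g.\ the $\beta'\in\beta$ in the $\Pi/\Sigma$ clauses), those $\beta$-indexed relations are themselves ordinal-independent among ordinals at which the relevant sets are defined. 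Combined with the first bullet — which lets us move a witness $\beta<\alpha$ up into $\rho$ — one gets $i\,\varepsilon_\alpha\,m\Rightarrow i\,\varepsilon_\rho\,m$, and symmetrically. The only subtle clause is the cover clause: here $q\,\varepsilon_\alpha\,(a\widetilde\triangleleft_{s,i,c}v)$ unfolds to $\exists\beta\in\alpha\,[*_\beta\wedge \pp(a,q)\in V_\beta]$, and since $*_\beta$ and $V_\beta$ depend only on $\beta$, a witness $\beta$ transfers between $\alpha$ and $\rho$ exactly as before; one should, however, check that $*_\beta$ is \emph{itself} stable, i.e.\ that the conditions $\mathsf{Fam}_\beta(v,s)$ etc.\ defining it are preserved — but this is precisely the induction hypothesis for $\mathsf{Fam}$, which reduces to the two bullets at $\beta$.

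\emph{The main obstacle} I anticipate is purely bookkeeping: making the simultaneous induction airtight requires isolating a clean sublemma to the effect that \emph{all} the auxiliary predicates built from $\mathsf{Set}_\beta$ and $\varepsilon_\beta$ — namely $\mathsf{Fam}_\beta$, $\mathsf{Set}_{\in\beta}$, $\varepsilon_{\in\beta}$, the conditions $*_\beta$, and the smallest-set operator yielding $V_\beta$ — are genuinely functions of $\beta$ alone and do not secretly depend on the ambient $\alpha$. Once that is spelled out, each clause of Definition~\ref{fixdef} is a one-line verification, and there is no genuine mathematical difficulty; the work is in handling the $\widetilde\triangleleft$ clause carefully because $V_\beta$ is defined by a least-fixed-point (whose existence rests on $\mathbf{REA}$) rather than by an explicit arithmetical condition, so one must note that ``smallest subset satisfying (a),(b)'' is unambiguous and $\beta$-determined.
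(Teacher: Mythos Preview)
Your proposal is correct and follows essentially the same approach as the paper: a simultaneous transfinite induction on $\alpha$ with a case analysis on the outermost constructor of $m$, invoking the induction hypothesis at the witnessing $\beta\in\alpha$ supplied by Definition~\ref{fixdef}. The paper's own proof is in fact terser than yours; it spells out only the $\widetilde\Pi$ case and waves at the cover clause as ``similar, although more involved,'' which is exactly the point you flag about $V_\beta$.

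One small sharpening: your phrase ``move a witness $\beta<\alpha$ up into $\rho$'' is not quite how the argument runs, since $\beta$ need not lie in $\rho$. Rather, from $\mathsf{Set}_\rho(m)$ you obtain a separate witness $\gamma\in\rho$, and the induction hypothesis at $\beta$ (applied with $\rho':=\gamma$) shows that the $\varepsilon_\beta$-conditions and the $\varepsilon_\gamma$-conditions coincide on all the relevant codes, whence the body of the membership clause transfers from $\beta$ to $\gamma$. In the cover case this is precisely what forces $V_\beta=V_\gamma$ as sets, since the two closure conditions defining them become literally the same once $\varepsilon_\beta$ and $\varepsilon_\gamma$ agree on $s$, on each $\{i\}(x)$, on each $\{v\}(x)$, and on each $\{c\}(x,y,u)$.
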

\begin{proof} The proof is similar to the one for Lemma 4.2 in \cite{R93,RG94}. (i) and (ii) are proved simultaneously  by induction on $\alpha$ (cf. \cite{czf2}, Proposition 9.4.3). Suppose $\mathsf{Set}_\alpha(m)$ and $\mathsf{Set}_\rho(m)$. We look at the forms $m$ can have.

If $m$ is $\mathsf{n}_0$, $\mathsf{n}_1$ or $\mathsf{n}$, then the claim is immediate in view of clauses $(1)$ and $(2)$ in the previous definition.

If $m$ is of the form $\pit(k,e)$, then there exists $\beta\in \alpha$ such that $\mathsf{Fam}_\beta(e,k)$. The induction hypothesis applied to $\beta$ yields that whenever $\mathsf{Fam}_\xi(e,k)$, then for all $\xi$
$$\forall j\in \mathbb{N}(i\,\varepsilon _\beta\,m\leftrightarrow i\,\varepsilon _\xi\,m)$$
$$\forall i\in \mathbb{N}\forall j\in \mathbb{N}(i\,\varepsilon _\beta\,m\rightarrow (j\,\varepsilon _\beta\,\{e\}(i)\leftrightarrow j\,\varepsilon _\xi\,\{e\}(i)))$$
The claim $(*)$  follows from the foregoing.  If $m$ is either $\sigmat(k,e)$, $+(a,b)$, $\mathsf{list}(a)$ or $\mathsf{id}(a,b,c)$ the argument proceeds as in the previous case.

If $m$ is of the form $a\widetilde{\triangleleft}_{s,i,c}v$, the proof is similar, although more involved.
\end{proof}

\begin{definition}We define in $\mathbf{CZF}+\mathbf{REA}$ the formula $\mathsf{Set}(n)$ as $\exists \alpha(\mathsf{Set}_\alpha(n))$ and $x\,\varepsilon \,y$ as $\exists \alpha(x\,\varepsilon _\alpha\, y)$.

\end{definition}

\begin{theorem}\label{real}
        Consistency of the theory $\mathbf{CZF}+\mathbf{REA}$  implies the consistency of the  theory \mluif\  extended with the formal Church thesis \ct.
        \end{theorem}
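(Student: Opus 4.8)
The plan is to build a Kleene-style realizability model of $\mluif$ entirely inside $\mathbf{CZF}+\mathbf{REA}$, using the families $\mathsf{Set}_\alpha$ and $\varepsilon_\alpha$ from Definition~\ref{fixdef} together with the stabilized predicates $\mathsf{Set}(n)$ and $x\,\varepsilon\,y$, and then to check that every axiom and rule of $\mluif$, as well as the translation of $\ct$, is realized; from this a consistency proof follows by the usual argument that if $\bot$ were derivable it would be realized, contradicting the fact (provable in $\mathbf{CZF}+\mathbf{REA}$) that the empty type $\mathsf{n}_0$ has no members. Concretely, the first step is to assign to each closed type expression $A$ of $\mluif$ a natural number code $|A|$ so that $\mathsf{Set}(|A|)$ holds, and to interpret $a\in A$ by $|a|\,\varepsilon\,|A|$, where $|a|$ is an applicative term of the first Kleene algebra obtained from the term $a$ by the encoding of lambda-terms discussed in the excerpt (the one that validates the replacement rule but not the $\xi$-rule). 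Lemma~\ref{fix} is what makes this interpretation well-defined: it guarantees that membership $i\,\varepsilon\,m$ does not depend on which ordinal stage $\alpha$ witnesses $\mathsf{Set}_\alpha(m)$, so the clauses for $\Pi$, $\Sigma$, $+$, $\mathsf{List}$, $\mathsf{Id}$ and the cover code $\widetilde{\triangleleft}$ behave functionally.

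Next I would go through the type and term formers of $\mlu$ one by one, largely following the verification in \cite{IMMS}: the universe $U_0$ is interpreted by the predicate $\mathsf{Set}$ together with the decoding $\mathsf{T}$, the small type constructors by the operations $\sigmat,\pit,+,\mathsf{list},\mathsf{id}$ on codes, and the natural numbers type by $\mathsf{n}$. The new work is the inductive basic cover: for an axiom set $\mathsf{axcov}(s,i,c)$ satisfying the conditions $*_\beta$, one interprets $a\triangleleft_{s,i,c}v$ by the code $a\,\widetilde{\triangleleft}_{s,i,c}\,v$, whose members at stage $\beta$ are exactly the elements of the least fixpoint set $V_\beta$ whose existence is delivered by $\mathbf{REA}$. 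The realizers $\mathsf{rf}$ and $\mathsf{tr}$ are interpreted by the encodings $\rft$ and $\trt$, and one must check that the two closure conditions defining $V_\beta$ correspond precisely to the introduction rules rf-$\triangleleft$ and tr-$\triangleleft$. For the elimination rule ind-$\triangleleft$ and its computation rules C$_1$-ind-$\triangleleft$ and C$_2$-ind-$\triangleleft$, I would argue by the induction principle attached to the least-fixpoint definition of $V_\beta$: given realizers $q_1,q_2$ of the two premises, one defines $\mathsf{ind}(m,q_1,q_2)$ by recursion along the generation of $V_\beta$, obtaining a partial recursive function whose totality on the members of $a\,\widetilde{\triangleleft}_{s,i,c}\,v$ is exactly the content of the $V_\beta$-induction; the two equalities C$_1$, C$_2$ then hold as numerical equalities of applicative terms by unfolding one step of this recursion, which is where the recursion-theoretic fixed-point combinator is used. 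The replacement rule repl) is handled as in \cite{IMMS}, the key point being that the chosen encoding of lambda-terms makes substitution commute with the interpretation up to numerical equality.

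Finally, $\ct$ is realized by essentially the same argument as in \cite{IMMS}: given a realized total function $f\in\mathsf{N}\to\mathsf{N}$, a realizer packages an index for the associated partial recursive function together with, for each $x$, a realizer of the $\Sigma$-statement $(\exists z)(T(e,x,z)\wedge \mathsf{Ap}(f,x)=_{\mathsf N}U(z))$, all of which is available because the interpretation of a term of type $\mathsf N\to\mathsf N$ is itself (a code for) a partial recursive function, and $T$, $U$ are primitive in $\mathbf{CZF}+\mathbf{REA}$ by assumption. I expect the main obstacle to be the cover case of Lemma~\ref{fix} and the corresponding soundness of ind-$\triangleleft$: one has to show that the least fixpoint $V_\beta$ is genuinely independent of the ordinal stage (so that members of $a\,\widetilde{\triangleleft}_{s,i,c}\,v$ are stage-absolute) and that the constructive least-fixpoint induction available from $\mathbf{REA}$ is strong enough to justify the dependent elimination towards types $P(x,u)$ that may depend on the cover proof-term $u$ --- this is precisely the point where the proof has to diverge from the classical fixed-point argument of \cite{IMMS} and follow instead the Kripke--Platek-style technique of \cite{R93,RG94}, carrying the ordinal-indexed approximations and invoking $\mathbf{REA}$ to close them off at a set-sized stage.
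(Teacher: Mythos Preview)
Your overall strategy matches the paper's, but there is a structural confusion in the first step that would block the argument as stated. You propose to ``assign to each closed type expression $A$ of $\mluif$ a natural number code $|A|$ so that $\mathsf{Set}(|A|)$ holds, and to interpret $a\in A$ by $|a|\,\varepsilon\,|A|$''. This cannot work: $U_0$ itself is a type of $\mluif$, as are the large types $\mathsf{T}(s)\to U_0$ occurring among the premisses of the cover rules and the arbitrary types $P(x,u)$ in ind-$\triangleleft$, and none of these can receive a number $n$ with $\mathsf{Set}(n)$ (that would amount to coding $U_0$ inside itself). You half-recognize this when you later say that $U_0$ is interpreted by the \emph{predicate} $\mathsf{Set}$, but the two descriptions are inconsistent. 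The paper resolves this by interpreting pretypes not as numerical codes but as \emph{definable subclasses} $A^I=\{x\mid \phi\}$ of $\mathbb{N}$ in the language of $\mathbf{CZF}+\mathbf{REA}$; only elements of $U_0$ receive numerical codes, and the link is $\mathsf{T}(a)^I:=\{x\mid x\,\varepsilon\,a^I\}$. Validity of $a\in A\,[\Gamma]$ is then $\mathbf{CZF}+\mathbf{REA}$-provability of $\Gamma^I\to a^I\in A^I$, with contexts rendered as conjunctions. Once you make this correction your plan lines up with the paper's.

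One further point of detail: you attribute to $\mathbf{REA}$ the job of ``closing off the ordinal-indexed approximations at a set-sized stage''. In the paper $\mathbf{REA}$ is invoked only in Definition~\ref{fixdef}(7), to guarantee that the least fixpoint $V_\beta$ exists as a set. The work of showing that the premisses of a rule such as rf-$\triangleleft$ or tr-$\triangleleft$ all live at a common ordinal stage $\delta$ (so that Lemma~\ref{fix} applies and the conclusion lands at some $\theta\ni\delta$) is done by \emph{strong collection}: one collects the witnessing ordinals for the set-many instances of the side conditions into a set and takes their union. This, rather than $\mathbf{REA}$, is the constructive substitute for the classical fixed-point technique of \cite{IMMS}, and it is what you will need to spell out when verifying the cover rules.
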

 \begin{proof} We outline a realizability semantics in {\bf CZF}+$\mathbf{REA}$. Let $\mathbf{p}$ be a code for the primitive recursive pairing function $\pp$ introduced just before Definition \ref{fixdef}, i.e.
$\{\mathbf{p}\}(n,m) =\pp(n,m)$.\footnote{Recall that when we write $\{b\}(a_{1},\ldots,a_{n})$, we mean $\{\ldots\{\{b\}(a_{1})\}(a_{2})\ldots\}(a_{n})$.}

Every preterm is interpreted as a $\mathcal{K}_1$-applicative term (that is, a term built with numerals, variables and Kleene application) as it is done in \cite{IMMS} with the only difference that here we do not identify $\mathsf{N}$ with $\mathsf{List}(\mathsf{N}_1)$ but we consider it as a primitive type interpreted as the set of natural numbers $\mathbb{N}$;  zero and successor terms  are interpreted in the obvious way while the eliminator relative to $\mathsf{N}$ is interpreted using a numeral encoding a recursor.

We must notice that in introducing codes for sets in the universe in Definition \ref{fixdef} we took account of dependencies by means of natural numbers representing recursive functions; however every preterm depending  on variables will be interpreted as a $\mathcal{K}_1$-applicative term having the same free variables (we identify the variables of \mluif\ with those in {\bf CZF}+$\mathbf{REA}$). For these reasons, whenever a term $s$ in \mluif\ depends on terms $t_1,\ldots,t_n$ in context, its interpretation will depend on the interpretations of $t_1,\ldots,t_n$ bounded with adequate $\Lambda$ operators. The variables which will be bounded by these $\Lambda$s will be the ones used in the rule where the term $s$ is introduced. This abuse of notation allows us to avoid heavy fully-annotated terms in the syntax. 

We only need to interpret the new preterms of \mluif\ as follows.
\begin{enumerate}

\item $(a\widehat{\triangleleft}_{s,i,c}v)^{I}$ is defined as $\{\mathbf{p}\}(6,\{\mathbf{p}_5\}(a^I,v^I,s^I,\Lambda x.i^I,\Lambda x.\Lambda y.c^I))$
($=a^I\;\widetilde{\triangleleft}_{s^I,\Lambda x.i^I,\Lambda x.\Lambda y.c^I}\;v^I$),\footnote{$(=\ldots)$ is meant to convey that the preceding term evaluates to same number as the one indicated by $\ldots$.}
 where 
$\mathbf{p}$ and $\mathbf{p}_5$ are numerals representing the encoding of pairs of natural numbers and of $5$-tuples of natural numbers, respectively.
\item $(\mathsf{rf}(a,r))^I:=\{\mathbf{p}\}(7,\{\mathbf{p}\}(a^I,r^I))$ ($=\rft(a^I,r^I)$).
\item $(\mathsf{tr}(a,j,r))^I:=\{\mathbf{p}\}(8,\{\mathbf{p}_3\}(a^I,j^I,r^I))$  ($=\trt(a^I,j^I,r^I)$), where $\mathbf{p}_3$ is a numeral representing the encoding of triples of natural numbers.

\item $(\mathsf{ind}(m,q_1,q_2))^I$ is $\{\mathbf{ind}_{q_1,q_2}\}(m^I)$ where $\mathbf{ind}_{q_1,q_2}$ is the code of a recursive function\footnote{Actually depending primitive recursively on the parameters, i.e. free variables, occuring in $q_1$ and $q_2$.}  such that 
\begin{enumerate}
\item $\mathbf{ind}_{q_1,q_2}(\rft(z,r))\simeq\{\Lambda x.\Lambda w. q_1^I\}(z,r)~\footnote{ $t\simeq s$ to mean that the applicative term $t$ converges if and only if  $s$ converges, and in this case $t=s$. }$,
\item $\mathbf{ind}_{q_1,q_2}(\trt(a,i,r))\simeq\{\Lambda x.\Lambda h.\Lambda k.\Lambda f.q_{2}^I\}(a,i,r,\Lambda z.\Lambda u.\mathbf{ind}_{q_1,q_2}(\ \{r\}(z,u)) )$.
\end{enumerate}
For the existence of such a  code one  appeals to the recursion theorem.
\end{enumerate}

\noindent 
If $\tau$ is a $\mathcal{K}_1$-applicative term and $A=\{x|\,\phi\}$ is a class, we will define $\tau\,\in\, A$ as an abbreviation for $\phi[\tau/x]$.

\noindent We will interpret pretypes into the language of set theory (with $\mathbf{CZF}+\mathbf{REA}$ being the interpreting theory) as definable subclasses of $\mathbb{N}$ 
 as follows.
\begin{enumerate}[align=left]
\item $\mathsf{N}_{0}^{I}:=\{x\in \mathbb{N}|\,\bot\}$.
\item $\mathsf{N}_{1}^{I}:=\{x\in \mathbb{N}|\,x=0\}$.
\item $\mathsf{N}^{I}:=\{x\in \mathbb{N}|\,x=x\}=\mathbb{N}$.
\item $((\Sigma y\in A)B)^{I}:=\{x\in \mathbb{N}|\,\pp_0(x)\in A^{I}\wedge \pp_1(x)\in B^{I}[\pp_0(x)/y] \}$.
\item $((\Pi y\in A)B)^{I}:=\{x\in \mathbb{N}|\,\forall y\in \mathbb{N}\,[y\in A^{I}\rightarrow \{x\}(y)\in B^{I}]\}$.
\item $(A+B)^{I}:=\{x\in \mathbb{N}|\,[\pp_0(x)=0 \wedge \pp_1(x)\in A^{I}]\,\vee\,[\pp_0(x)=1 \wedge \pp_1(x)\in B^{I}]\}$.
\item $(\mathsf{List}(A))^{I}:=\{x\in \mathbb{N}|\,\forall i\in \mathbb{N}\,[i<\ell(x)\rightarrow (x)_{i}\in A^{I}]\}$.
\item $(\mathsf{Id}(A,a,b))^{I}:=\{x\in \mathbb{N}|\,x=a^{I}\wedge a^{I}=b^{I}\, \wedge\,  a^{I}\in A^{I}\}$.
\item $U_0^{I}:=\{x|\,\mathsf{Set}(x)\}$.
\item $\mathsf{T}(a)^{I}:=\{x|\,x\,\varepsilon \,a^{I}\}$.
\end{enumerate}

\noindent Precontexts are interpreted as conjunctions of set-theoretic formulas  as follows. 
\begin{enumerate}
\item $[\;]^{I}$ is the formula $\top$;
\item $[\Gamma, x\in A]^{I}$ is the formula $\Gamma^{I}\,\wedge\, x\in A^{I}$.
\end{enumerate}

\noindent Validity of judgements $J$ in $\mathbf{CZF}+\mathbf{REA}$ under the foregoing interpretation is defined as follows:

\begin{enumerate}
\item $A\, type\,[\Gamma]$ holds if $\mathbf{CZF}+\mathbf{REA}\vdash \Gamma^{I}\to\forall x\,(x\in A^I\rightarrow x\in \mathbb{N})$ 
\item $A=B\, type\,[\Gamma]$ holds if  $\mathbf{CZF}+\mathbf{REA}\vdash \Gamma^{I}\to\forall x\,(x\in A^{I}\leftrightarrow x\in B^{I})$
\item $a\in A\,\,[\Gamma]$ holds if  $\mathbf{CZF}+\mathbf{REA}\vdash \Gamma^{I}\to a^{I}\in A^{I}$
\item $a=b\in A\, \,[\Gamma]$ holds if  $\mathbf{CZF}+\mathbf{REA}\vdash \Gamma^{I}\to a^{I}\in A^{I}\wedge a^{I}=b^{I}$,
\end{enumerate}
where $x$ is a fresh variable.

The encoding of lambda-abstraction in terms of $\mathcal{K}_1$-applicative terms can be chosen (see \cite{IMMS}) in such a way that if $a$ and $b$ are terms and $x$ is a variable which is not bounded in $a$, then the terms $(\,a[b/x]\,)^{I}$ and $a^{I}[\,b^{I}/x]$ coincide.

The proof that for every judgement if $\mluif\vdash J$, then $J$ holds in the realizability model is a long verification. 
Serving as a generic example, we will prove 
that the rules for the inductively generated covers (rf-$\triangleleft$) and (tr-$\triangleleft$) preserve the validity of judgments in the model.
In doing so we will tacitly be assuming that the interpretation of the  context $\Gamma^{I}$  holds true.

\begin{enumerate}[align=left]
\item[(rf-$\triangleleft$)]  
Suppose the premisses of the following rule are valid in the model.
$$\mbox{\rm rf-$\triangleleft$} \
\displaystyle{\frac{\begin{array}{l}
      s \in U_0 \qquad i(x)\in U_0\  [x\in \mathsf{T}(s)] \ \ \ c(x,y)\in \mathsf{T}(s)\rightarrow U_0 \ \ [x\in \mathsf{T}(s), y\in \mathsf{T}(i(x))]\\
   a\in \mathsf{T}(s)\qquad  v\in \mathsf{T}(s)\rightarrow U_0\qquad\qquad  r\in a\,\epsilon\, v\ \end{array} }
  {\displaystyle\mathsf{rf}(a,r)\in a \triangleleft_{s,i,c} v}}$$
Then, the following  hold true: 
\begin{enumerate}[align=left]
\item $\mathsf{Set}(s^I)$
\item $\forall x\in \mathbb{N}\,(x\,\varepsilon\, s^I\rightarrow \mathsf{Set}((i(x))^I))$
\item $\forall x,y,z\in \mathbb{N}\,(x\,\varepsilon\, s^I\wedge y \,\varepsilon\, (i(x))^I\wedge x\,\varepsilon\, s^I\rightarrow  \mathsf{Set}(\{(c(x,y))^I\}(z)))$
\item $a^I\,\varepsilon \, s^I$
\item $\forall x\in \mathbb{N}\,(x\,\varepsilon\, s^I\rightarrow \mathsf{Set}(\{v^I\}(x)))$
\item $r^I\,\varepsilon \, \{v^I\}(a^I)$
\end{enumerate}
(a) entails that there exists an ordinal $\alpha$ such that \begin{eqnarray}\label{MRG0}&&\mathsf{Set}_{\alpha}(s^I)\end{eqnarray} and by (b) we have
 $\forall x\in \mathbb{N}\,(x\,\varepsilon_{\alpha}\, s^I\rightarrow \exists\beta\,\mathsf{Set}_{\beta}((i(x))^I))$. Using strong collection in our background theory, the latter yields the existence of a set of ordinals $b_1$ such that 
  \begin{eqnarray}\label{MRG1} &&\forall x\in \mathbb{N}\,(x\,\varepsilon_{\alpha}\, s^I\rightarrow \exists\beta\in b_1\,\mathsf{Set}_{\beta}((i(x))^I)).\end{eqnarray}
  Let $\gamma$ be $\alpha\cup b_1\cup \bigcup b_1$. One then determines that $\gamma$ is an ordinal. From (c) it follows by strong collection that there exists a set of ordinals $b_2$
  such that 
\begin{eqnarray}\label{MRG2} && \forall x,y,z\in \mathbb{N}\,(x\,\varepsilon_{\gamma}\, s^I\wedge y \,\varepsilon_{\gamma}\, (i(x))^I\wedge x\,\varepsilon_{\gamma}\, s^I\rightarrow  \exists \xi\in b_2\,\mathsf{Set}_{\xi}(\{(c(x,y))^I\}(z))).\end{eqnarray}
Likewise, (e) yields the existence of a set of ordinals $b_3$ such that 
\begin{eqnarray}\label{MRG3} &&\forall x\in \mathbb{N}\,(x\,\varepsilon_{\gamma}\, s^I\rightarrow \exists\zeta\in b_3\,\mathsf{Set}_{\zeta}(\{v^I\}(x))).\end{eqnarray}
  Finally, let $\delta$ be the ordinal $\gamma \cup b_2 \cup  \bigcup b_2 \cup b_3\cup \bigcup b_3$. In light of Lemma \ref{fix}, we can infer from  the above (\ref{MRG0}, \ref{MRG1}, \ref{MRG2} and \ref{MRG3})  collectively  that 
 \begin{itemize}[align=left]
\item[(i)] $\mathsf{Set}_{\delta}(s^I)$
\item[(ii)] $\forall x\in \mathbb{N}\,(x\,\varepsilon_{\delta}\, s^I\rightarrow \mathsf{Set}_{\delta}((i(x))^I))$
\item[(iii)] $\forall x,y,z\in \mathbb{N}\,(x\,\varepsilon_{\delta}\, s^I\wedge y \,\varepsilon_{\delta}\, (i(x))^I\wedge x\,\varepsilon_{\delta}\, s^I\rightarrow  \mathsf{Set}_{\delta}(\{(c(x,y))^I\}(z)))$
\item[(iv)] $a^I\,\varepsilon_{\delta} \, s^I$
\item[(v)] $\forall x\in \mathbb{N}\,(x\,\varepsilon_{\delta}\, s^I\rightarrow \mathsf{Set}_{\delta}(\{v^I\}(x)))$
\item[(vi)] $r^I\,\varepsilon_{\delta} \, \{v^I\}(a^I)$
\end{itemize}
Let $\theta$ be any ordinal such that $\delta\in \theta$. By Definition \ref{fixdef}(7), we then have
 $\mathsf{Set}_\theta(a^I\;\widetilde{\triangleleft}_{s^I,i^I,c^I}\; v^I)$ and  $\mathsf{rf}(a,r)^I=\rft(a^I,r^I)\,\varepsilon \,a^I\;\widetilde{\triangleleft}_{s^I,i^I,c^I}\,v^I$.
  So the validity of the judgement $\mathsf{rf}(a,r)\in a \triangleleft_{s,i,c} v$ in the model is established.

\item[(tr-$\triangleleft$)] Suppose the premisses of the following rule are valid in the model.
$$\mbox{\rm tr-$\triangleleft$} 
\displaystyle{
  \frac{\begin{array}{l} s\in U_0\ \ \ \  i(x)\in U_0 \  [x\in \mathsf{T}(s)] \ \ \ c(x,y)\in \mathsf{T}(s)\rightarrow U_0 \ \ [x\in \mathsf{T}(s), y\in \mathsf{T}(i(x))]\\
      a\in \mathsf{T}(s)\qquad j\in \mathsf{T}(i(a))\qquad   v\in \mathsf{T}(s)\rightarrow U_0 \\
      r\in (\Pi x\in \mathsf{T}(s))(x\,\epsilon\,c(a,j)\rightarrow x \triangleleft_{s,i,c}v) \end{array} }
  {\displaystyle \mathsf{tr}(a,j,r) \in a\triangleleft_{s,i,c}v }}$$
Then, in addition to  (a)-(e) of the previous case, $j^{I}\,\varepsilon \,\{i^{I}\}(a^{I})$ and $$\forall x\in \mathbb{N}\,\forall y\in \mathbb{N}\,[x\,\varepsilon \,s^{I}\wedge  y\,\varepsilon \,\{c^{I}(a^{I},j^{I})\}(x)\rightarrow \{r^I\}(x,y)\,\varepsilon \,x\,\widetilde{\triangleleft}_{s^I,i^I,c^I}\,v^I]$$ hold. Thus just as in the previous case, we have to find sufficiently large ordinals in which all the relevant coded sets ``live''. However, we will not repeat this procedure. 
Following it, we have, by Definition \ref{fixdef}(7),  $(\mathsf{tr}(a,j,r))^I=\trt(a^I,j^I,r^I)\,\varepsilon \,a^I\;\widetilde{\triangleleft}_{s^I,i^I,c^I}\,v^I$, which means that  $\mathsf{tr}(a,j,r) \in a\triangleleft_{s,i,c}v $ is valid in the model.
\end{enumerate}
To conclude, one can show in $\mathbf{CZF}+\mathbf{REA}$ that the interpretation of $\mathbf{CT}$ is inhabited by some numeral $\mathbf{n}$. This numeral is ``almost'' a Turing machine code for the identity function, however, it also depends on the Kleene normal form predicate $T$ and the result extracting function $U$.
\end{proof}

 \begin{cor}
 Consistency of the theory $\mathbf{CZF}+\mathbf{REA}$  implies the consistency of the  theory
 \mttind  $+ \mathbf{AC} + \mathbf{CT}$.
   \end{cor}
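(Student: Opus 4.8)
The plan is to obtain the consistency of $\mttind + \ac + \ct$ by transferring it along interpretations that are already in place, and then invoking Theorem~\ref{real}. Concretely, I would first recall from Corollary~\ref{intttmtt} that $\mttind$ is interpreted in $\mluif$, via the composition of the interpretation of Theorem~\ref{first} into the version \`a la Russell $\mluifr$ with the interpretation of Proposition~\ref{second} from $\mluifr$ into $\mluif$. As noted in the section introducing $\mluif$, this interpretation in addition validates the axiom of choice $\ac$ in the form stated in the introduction, because after the propositions-as-sets identification the elimination rules of $\Sigma$-types and of the inductive basic covers act towards arbitrary types. Hence we already have an interpretation of $\mttind + \ac$ into $\mluif$.

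The substantive point I would then check is that this interpretation carries the formal Church's thesis $\ct$ of $\mttind$ to a statement provable in $\mluif + \ct$. This amounts to observing that the arithmetical vocabulary in which $\ct$ is phrased is preserved essentially on the nose by both interpretations: the set $\mathsf{N}$ of natural numbers with its zero and successor, numerical equality $=_\mathsf{N}$, the function set $\mathsf{N}\rightarrow\mathsf{N}$ together with application $\mathsf{Ap}$, and the decidable (hence small-propositional) Kleene predicate $T$ and extractor $U$, all go through unchanged, both under the interpretation of Theorem~\ref{first} and under the re-encoding \`a la Tarski of Proposition~\ref{second}. Consequently the translation of the $\ct$ of $\mttind$ is — up to provable equivalence — the $\ct$ of $\mluif$, so that $\mttind + \ac + \ct$ is interpreted in $\mluif + \ct$.

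Finally, Theorem~\ref{real} states that consistency of $\czfrea$ implies consistency of $\mluif + \ct$. Since $\mttind + \ac + \ct$ is interpreted in $\mluif + \ct$, an interpretation cannot produce a proof of $\bot$ from nowhere, so consistency of the former follows from consistency of the latter, and therefore from consistency of $\czfrea$. This is exactly the claim.

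I expect the only real obstacle to lie in the bookkeeping of the middle step: one must make sure that passing through the Russell-style intermediate system $\mluifr$ and then re-encoding everything \`a la Tarski in $\mluif$ does not silently alter the interpretation of $\mathsf{N}$ (for instance replacing it by $\mathsf{List}(\mathsf{N}_1)$ or the like) and that the $\mathsf{Ap}$ occurring in the statement of $\ct$ for $\mttind$ is matched by the application available after interpretation, so that the translated $\ct$ is literally the one made true by the realizability model underlying Theorem~\ref{real}. Once this compatibility is verified, the corollary is immediate.
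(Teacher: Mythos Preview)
Your proposal is correct and follows essentially the same approach as the paper, which simply cites Corollary~\ref{intttmtt} and Theorem~\ref{real}; you have merely spelled out in detail the preservation of $\ac$ and $\ct$ under the interpretation, which the paper already records in the prose preceding Theorem~\ref{first}.
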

   \begin{proof} This follows from corollary ~\ref{intttmtt} and theorem \ref{real}.
 \end{proof}
   
   \begin{cor} The theory \mttind$+\mathbf{AC}+\mathbf{CT}$ has an interpretation  in the intensional version of the type theory $\mathbf{ML}_{\mathrm{1W}}\mathbf{V}$ in  Definition 5.1 of \cite{R93} or \cite{RG94}.
   \end{cor}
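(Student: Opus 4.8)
The plan is to transfer the realizability model constructed in the proof of Theorem~\ref{real} into the term model of the type theory $\mathbf{ML}_{\mathrm{1W}}\mathbf{V}$ of \cite{R93,RG94}, exploiting the fact that $\mathbf{ML}_{\mathrm{1W}}\mathbf{V}$ is designed precisely so that $\mathbf{CZF}+\mathbf{REA}$ (or a conservative extension thereof) can be interpreted in it. First I would recall from \cite{R93,RG94} that there is an interpretation of $\mathbf{CZF}+\mathbf{REA}$ into $\mathbf{ML}_{\mathrm{1W}}\mathbf{V}$: the type-theoretic universe closed under $\mathrm{W}$-types supplies, via the usual sets-as-trees (Aczel-style $\mathrm{V}$) construction, a model of $\mathbf{CZF}$, and the one $\mathrm{W}$-type-closed universe of $\mathbf{ML}_{\mathrm{1W}}\mathbf{V}$ yields the regular extension axiom $\mathbf{REA}$. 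Hence every theorem of $\mathbf{CZF}+\mathbf{REA}$ — in particular the existence of the transfinitely iterated relations $\mathsf{Set}_\alpha$, $\varepsilon_\alpha$ of Definition~\ref{fixdef}, Lemma~\ref{fix}, and the whole realizability verification carried out in the proof of Theorem~\ref{real} — holds in the internal sense inside $\mathbf{ML}_{\mathrm{1W}}\mathbf{V}$.

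Next I would observe that the realizability interpretation of Theorem~\ref{real} assigns to each judgement of $\mluif$ a formula in the language of set theory that is provable in $\mathbf{CZF}+\mathbf{REA}$ whenever the judgement is derivable in $\mluif + \mathbf{CT}$; composing with the interpretation of $\mathbf{CZF}+\mathbf{REA}$ into $\mathbf{ML}_{\mathrm{1W}}\mathbf{V}$ just described, one obtains for each such judgement an inhabited type of $\mathbf{ML}_{\mathrm{1W}}\mathbf{V}$, uniformly in the derivation. Then, using Corollary~\ref{intttmtt}, which interprets $\mttind$ (together with $\mathbf{AC}$, since $\mluif$ validates choice by its strong elimination rule) into $\mluif$, and recalling from the proof of Theorem~\ref{real} that the interpretation of $\mathbf{CT}$ is inhabited by the explicit numeral $\mathbf{n}$, I would compose all three interpretations — $\mttind+\mathbf{AC}+\mathbf{CT}\to\mluif+\mathbf{CT}\to\mathbf{CZF}+\mathbf{REA}\to\mathbf{ML}_{\mathrm{1W}}\mathbf{V}$ — to obtain the desired interpretation of $\mttind+\mathbf{AC}+\mathbf{CT}$ directly in the intensional $\mathbf{ML}_{\mathrm{1W}}\mathbf{V}$ of Definition~5.1 of \cite{R93,RG94}.

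The main obstacle is checking that the realizability development of Theorem~\ref{real} really can be carried out \emph{internally} in $\mathbf{ML}_{\mathrm{1W}}\mathbf{V}$, rather than merely in $\mathbf{CZF}+\mathbf{REA}$ as a first-order meta-theory. Two points need care. The first is that the set $V_\beta$ of Definition~\ref{fixdef}(7), whose existence is guaranteed by $\mathbf{REA}$, must be available as an internal set in $\mathbf{ML}_{\mathrm{1W}}\mathbf{V}$; this is exactly what the regularity of the $\mathrm{W}$-type-closed universe of $\mathbf{ML}_{\mathrm{1W}}\mathbf{V}$ gives, since $\mathbf{REA}$ is modelled there (see \cite{R93,RG94}). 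The second is the transfinite recursion on ordinals used to set up $\mathsf{Set}_\alpha$ and $\varepsilon_\alpha$ and the induction on $\alpha$ in Lemma~\ref{fix}; these go through because the sets-as-trees model inside $\mathbf{ML}_{\mathrm{1W}}\mathbf{V}$ validates $\in$-induction and the recursion principle of \cite[Prop.~9.4.4]{czf2}, so no principle beyond what $\mathbf{CZF}+\mathbf{REA}$ supplies is needed. Once these are in place, the verification is mechanical: every step of the proof of Theorem~\ref{real} is a theorem of $\mathbf{CZF}+\mathbf{REA}$, hence its image under the interpretation is a theorem of $\mathbf{ML}_{\mathrm{1W}}\mathbf{V}$, and the composite interpretation is the one claimed.
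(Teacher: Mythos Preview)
Your proposal is correct and follows essentially the same approach as the paper: compose the interpretation of \mttind$+\mathbf{AC}$ into \mluif\ (Corollary~\ref{intttmtt}), the realizability interpretation of \mluif$+\mathbf{CT}$ in $\mathbf{CZF}+\mathbf{REA}$ (Theorem~\ref{real}), and the interpretability of $\mathbf{CZF}+\mathbf{REA}$ in $\mathbf{ML}_{\mathrm{1W}}\mathbf{V}$ (Proposition~5.3 of \cite{R93,RG94}). Your discussion of the ``internal'' concerns is more cautious than necessary---since the interpretation of $\mathbf{CZF}+\mathbf{REA}$ into $\mathbf{ML}_{\mathrm{1W}}\mathbf{V}$ is a syntactic translation preserving provability, every theorem of $\mathbf{CZF}+\mathbf{REA}$ (including the entire realizability construction) transfers automatically without further verification---but the overall argument is the same as the paper's one-line proof.
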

   \begin{proof} This is a consequence of  corollary ~\ref{intttmtt} together with the proof of  the above Theorem \ref{real}  and  Proposition 5.3 in \cite{R93,RG94}, namely the interpretability of  $\mathbf{CZF}+\mathbf{REA}$ in $\mathbf{ML}_{\mathrm{1W}}\mathbf{V}$. \end{proof}
 
   \begin{remark} In a certain sense there is nothing special about inductively generated basic covers in that the interpretation of    \mluif\ in $\mathbf{CZF}+\mathbf{REA}$ would also work if one added further inductive types
such as generic well founded sets to   \mluif. In the same vein one could add more universes or even superuniverses (see \cite{pasu, rasu}) after beefing up the interpreting set theory by adding large set axioms.  As a consequence one can conclude that intensional Martin-L\"of type theory with some or all these type constructors added, but crucially missing the $\xi$-rule, is consistent with  the formal Church's thesis.
\end{remark}

\begin{theorem}\label{ptsmltt}\mluif\ and    $\mathbf{CZF}+\mathbf{REA}$ have the same proof-theoretic strength. \end{theorem}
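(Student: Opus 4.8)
The strategy is to establish the two directions of proof-theoretic equivalence separately, exhibiting in each case an interpretation that preserves (at least) the arithmetic consequences. The upper bound — that $\mluif$ is no stronger than $\mathbf{CZF}+\mathbf{REA}$ — is already essentially in hand: Theorem~\ref{real} produces a realizability interpretation of $\mluif$ (indeed of $\mluif$ extended with $\ct$) inside $\mathbf{CZF}+\mathbf{REA}$, and this interpretation is faithful enough to transfer consistency, hence in particular the provable $\Pi^0_2$-sentences of $\mluif$ are provable in $\mathbf{CZF}+\mathbf{REA}$. So the first step is simply to observe that the realizability model of Definition~\ref{fixdef} and Theorem~\ref{real} witnesses $|\mluif| \le |\mathbf{CZF}+\mathbf{REA}|$ in the usual proof-theoretic ordering.

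For the reverse inequality, the plan is to invoke the known proof-theoretic analysis of $\mathbf{CZF}+\mathbf{REA}$. By the work of Rathjen (the same papers \cite{R93,RG94} cited throughout this section), $\mathbf{CZF}+\mathbf{REA}$ is proof-theoretically reducible to, and indeed of the same strength as, the Martin-L\"of type theory $\mathbf{ML}_{\mathrm{1W}}\mathbf{V}$ — a type theory with one universe closed under $W$-types (well-founded trees) together with a further ``$V$'' closure. The corollary preceding this theorem already records that $\mluif$ interprets into $\mathbf{ML}_{\mathrm{1W}}\mathbf{V}$; conversely one checks that $\mathbf{ML}_{\mathrm{1W}}\mathbf{V}$, or at least a fragment of it sufficient to carry Rathjen's lower-bound argument for $\mathbf{CZF}+\mathbf{REA}$, embeds into $\mluif$. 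The key point here is that $\mluif$ contains $\mlu$ (Martin-L\"of type theory with one universe \`a la Tarski) plus the inductively generated basic covers, and the remark following the last corollary observes that these basic-cover rules behave, for the purposes of the set-theoretic interpretation, like generic well-founded ($W$-type) constructors: one can simulate a $W$-type as a suitable instance of an inductively generated cover, or more directly note that the interpretation into $\mathbf{CZF}+\mathbf{REA}$ is insensitive to replacing the covers by $W$-types. Thus $\mluif$ has at least the strength of the type theory with one universe and $W$-types, which by \cite{R93,RG94} already matches $\mathbf{CZF}+\mathbf{REA}$.

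Assembling the two directions, and noting that the absence of the $\xi$-rule in $\mluif$ (the crucial syntactic restriction discussed after the rule \textbf{repl}) does not lower its strength — since all the ingredients of the lower-bound simulation, being closed applicative terms or living under a single fixed context, never require $\xi$ — one concludes that $\mluif$ and $\mathbf{CZF}+\mathbf{REA}$ are proof-theoretically equivalent. I expect the main obstacle to be the reverse (lower-bound) direction: one must be careful that the interpretation of $\mathbf{ML}_{\mathrm{1W}}\mathbf{V}$, or the relevant fragment, into $\mluif$ genuinely goes through despite the missing $\xi$-rule, and that the well-founded-set/basic-cover simulation preserves enough structure (elimination towards dependent types, the computation rules $\mathrm{C}_1$, $\mathrm{C}_2$) to carry Rathjen's ordinal analysis; verifying this simulation in detail is where the real work lies, the rest being citation of known reductions.
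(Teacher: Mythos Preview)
Your upper-bound direction via Theorem~\ref{real} matches the paper. For the lower bound, however, the paper does not attempt to embed $\mathbf{ML}_{\mathrm{1W}}\mathbf{V}$ or general $W$-types into $\mluif$. Instead it passes through a different intermediate theory from \cite{R93,RG94}, the second-order intuitionistic arithmetic $\mathbf{IARI}$, whose characteristic axiom of \emph{inductive generation} asserts that the well-founded part of any binary relation on $\mathbb N$ is a set. Theorems~5.13, 6.9 and 6.13 of \cite{R93} already give $\mathbf{IARI}$ the strength of $\mathbf{CZF}+\mathbf{REA}$, so the remaining task is to interpret $\mathbf{IARI}$ in $\mluif$, reading second-order variables as $U_0$-valued predicates. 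The one non-routine step is the inductive-generation axiom, handled by an explicit basic-cover construction: given $R\in \mathsf{T}(s)\times\mathsf{T}(s)\to U_0$, set $i(x):=s$, take $v$ constantly $\mathsf{n}_0$ (the empty predicate), put $c(x,y)(z):=R(z,x)$ with $y$ dummy, and define $\mathsf{WP}(R)(a):= a\triangleleft_{s,i,c}v$. With $v$ empty the $\mathsf{rf}$-clause is vacuous and $(\mathrm{tr}\mbox{-}\triangleleft)$ (instantiated with $j:=a$) says exactly that $a$ enters $\mathsf{WP}(R)$ once all its $R$-predecessors have; $(\mathrm{ind}\mbox{-}\triangleleft)$ then supplies the required induction principle.

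Your proposed route via a direct $W$-type simulation faces a structural obstacle you do not address: the branching in $\mathsf{tr}(a,j,r)$ is over $\{z\in\mathsf{T}(s)\mid z\,\epsilon\,c(a,j)\}$, a subset of the \emph{fixed} carrier $\mathsf{T}(s)$, whereas $\mathsf{sup}(a,f)$ in a $W$-type branches over an arbitrary small type $B(a)$, and there is no evident carrier $\mathsf{T}(s)$ in which to realise all such $B(a)$ as subsets without already having the $W$-type in hand (or falling back on a numerical encoding, which is precisely the $\mathbf{IARI}$ route). The paper's choice of $\mathbf{IARI}$ sidesteps this because its single inductive principle---accessibility on a fixed domain---is exactly the special case that basic covers capture. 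The Remark immediately following the theorem confirms that $\mathbf{IARI}$, and in particular its replacement schema, is the operative calibration point here.
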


\begin{proof}It follows from   \cite{R93}, Theorem 5.13, Theorem 6.9, Theorem 6.13  (or the same theorems in \cite{RG94}) together with our theorem~\ref{real} and the observation that the theory $\mathbf{IARI}$ of \cite{R93} in  Definition 6.2 can already be interpreted in \mluif\ using the interpretation of \cite{R93} in Definition 6.5.

We just recall that $\mathbf{IARI}$ is a subsystem of second order intuitionistic number theory. It has a replacement schema and an axiom of {\em inductive generation} asserting that for every binary set relation $R$ on the naturals the well-founded part of this relation is a set. 
 The interpretation for the second order variables are the propositions on the naturals with truth conditions in $U_0$.
 
The crucial step is to interpret  the axiom of {\em inductive generation} of $\mathbf{IARI}$ in \mluif. To this purpose one has to show 
that if $s\in U_0$ and $R\in \mathsf{T}(s)\times\mathsf{T}(s)\to U_0$ then the well-founded part of $R$, $\mathsf{WP}(R)$, can be given as a predicate $\mathsf{WP}(R)\in \mathsf{T}(s)\to U_0$. 
 To this end define $i\in \mathsf{T}(s)\to U_0$ by  $i(x):=s$, $v\in \mathsf{T}(s)\to U_0$ by $v(p):=\mathsf{n}_0$, $c(x,y)\in \mathsf{T}(s)\to U_0$ by $c(x,y)(z) := R(z,x)$ (so $y$ is dummy) for $x\in \mathsf{T}(s)$ and $y\in \mathsf{T}(s)$. 
 Now let $\mathsf{WP}(R)(a):=a\triangleleft_{s,i,c}v$ for $a\in \mathsf{T}(s)$.
 Then it follows that $a$ is in the well-founded part exactly when $\mathsf{WP}(R)(a)$ is inhabited.
 To see this, suppose we have a truth maker $r$ for $(\Pi x\in \mathsf{T}(s))(R(x,a) \to \mathsf{WP}(R)(x))$.
Then  $r\in (\Pi x\in \mathsf{T}(s))(x\,\epsilon\, c(a,a) \to x\triangleleft_{s,i,c}v)$, hence $\mathsf{tr}(a,a,r)\in a\triangleleft_{s,i,c}v$
by ($\mathrm{tr}\mbox{-}\triangleleft)$, whence $\mathsf{tr}(a,a,r)\in \mathsf{WP}(R)(a)$. 
 Thus $\mathsf{WP}(R)$ satisfies the appropriate closure properties characterizing the well-founded part of $R$. The  pertaining induction principle is then a consequence  of $(\mathrm{ind}\mbox{-}\triangleleft)$. 
 \end{proof}
%
%
%

\begin{remark}
 The proof of theorem~\ref{ptsmltt}
 does not work if we substitute \mluif\ with \mttind.
 The reason is that the axiom of replacement in $\mathbf{IARI}$ does not appear
 to be interpretable in \mttind\ and to establish the exact proof theoretic strength of \mttind\  is left to future work.
  \end{remark}
 \subsection*{Conclusions}
                      In the future we aim to  further extend the realizability semantics presented here to model \mfind\ enriched with coinductive definitions
                      capable of representing generated Positive Topologies
                      in \cite{somepoint}.

                     Further goals would be to study the consistency strength
                      of \mttind\ or of  \mtt\  extended with specific inductive formal topologies 
                      such as that of the Baire space.

\subsection*{Acknowledgments} We would like  to thank the anonymous  referees very much for their keenly perceptive comments.
The first author acknowledges very helpful discussions with  F. Ciraulo, P. Martin-L{\"o}f, G. Sambin and T. Streicher. The third author was supported by a grant from the John Templeton Foundation
(``A new dawn of intuitionism: mathematical and philosophical advances," ID 60842) and a grant from the University of Padova  for a research sojourn at its Department of Mathematics.

\end{document}